\documentclass[11pt, reqno, dvipdfmx]{amsart}
\usepackage[T1]{fontenc}
\usepackage{lmodern}
\usepackage{amssymb, amsthm, mathrsfs}
\usepackage{mathtools}
\usepackage{geometry}
\usepackage{enumitem}
\setlist[enumerate]{
  label=\textup{(\arabic*)},
  font=\normalfont,
  leftmargin=25pt
}
\setlist[itemize]{
  font=\normalfont,
  leftmargin=25pt
}

\usepackage[all]{xy}
\usepackage{xcolor}
\definecolor{darkblue}{rgb}{0.0, 0.0, 0.55}
\definecolor{darkmagenta}{rgb}{0.55, 0.0, 0.55}
\definecolor{darkcyan}{rgb}{0.0, 0.55, 0.55}

\usepackage[dvipdfmx]{hyperref}
\hypersetup{
  colorlinks = true,
  linkcolor  = darkblue,
  citecolor  = darkmagenta,
  urlcolor   = darkcyan
}

\newcommand{\al}{\alpha}
\newcommand{\be}{\beta}

\newcommand{\la}{\lambda}

\newcommand{\si}{\sigma}

\newcommand{\vph}{\varphi}

\newcommand{\PP}{{\mathbb P}}

\newcommand{\ZZ}{{\mathbb Z}}
\newcommand{\NN}{{\mathbb N}}

\def\rnum#1{\expandafter{\romannumeral #1}}
\def\Rnum#1{\uppercase\expandafter{\romannumeral #1}}

\renewcommand{\mod}{\operatorname{\mathsf{mod}}}

\newcommand{\grmod}{\operatorname{\mathsf{grmod}}}
\newcommand{\GrMod}{\operatorname{\mathsf{GrMod}}}
\newcommand{\CM}{\operatorname{\mathsf{CM}^{\mathbb Z}}}
\newcommand{\uCM}{\operatorname{\underline{\mathsf{CM}}^{\mathbb Z}}}
\newcommand{\qgr}{\operatorname{\mathsf{qgr}}}
\newcommand{\D}{\operatorname{\mathsf{D}}}
\newcommand{\Hom}{\operatorname{Hom}}

\newcommand{\Ext}{\operatorname{Ext}}

\newcommand{\mnull}{\operatorname{null}}
\renewcommand{\Im}{\operatorname{Im}}
\newcommand{\Ker}{\operatorname{Ker}}
\newcommand{\Coker}{\operatorname{Coker}}
\newcommand{\gldim}{\operatorname{gldim}}
\newcommand{\injdim}{\operatorname{injdim}}

\newcommand{\op}{\operatorname{op}}

\theoremstyle{plain} 
\newtheorem{thm}{Theorem}[section]
\newtheorem{cor}[thm]{Corollary}
\newtheorem{lem}[thm]{Lemma}
\newtheorem{prop}[thm]{Proposition}

\theoremstyle{definition}
\newtheorem{dfn}[thm]{Definition}
\newtheorem{ex}[thm]{Example}

\numberwithin{equation}{section}

\newcommand{\new}[{1}]{{\color{blue}#1}}

\begin{document}

\title{A note on even Clifford algebras of skew quadric hypersurfaces}

\author{Tomoya Oshio}
\address{Department of Science and Technology, Graduate School of Medicine, Science and Technology, Shinshu University, 3-1-1 Asahi, Matsumoto, Nagano 390-8621, Japan}
\email{26hs601e@shinshu-u.ac.jp} 

\author{Kenta Ueyama}
\address{Department of Mathematics, Faculty of Science, Shinshu University, 3-1-1 Asahi, Matsumoto, Nagano 390-8621, Japan}
\email{ueyama@shinshu-u.ac.jp}


\subjclass[2020]{
16S37,
16S38,
16G50,
18G65.
}
\keywords{noncommutative quadric hypersurface,
even Clifford algebra,
skew polynomial algebra,
Cohen-Macaulay module}

\begin{abstract}
Let $S_\alpha = k\langle x_1,\dots,x_n\rangle /(x_i x_j - \alpha_{ij} x_j x_i)$ be a standard graded skew polynomial algebra over an algebraically closed field $k$ of characteristic not equal to $2$. We show the following results.

When $n$ is odd and $f = x_1x_2 + \cdots + x_{n-2}x_{n-1} + x_n^2$
is a normal element of $S_\alpha$, the even Clifford algebra of the skew quadric hypersurface $S_\alpha/(f)$ is isomorphic to a full matrix algebra $M_{2^{(n-1)/2}}(k)$, and the stable category $\underline{\mathsf{CM}}^{\mathbb Z}(S_\alpha/(f))$ of graded maximal Cohen-Macaulay modules over $S_\alpha/(f)$ is triangle equivalent to the derived category $\mathsf{D}^b(\mathsf{mod}\,k)$.

When $n$ is even and $f = x_1x_2 + \cdots + x_{n-1}x_n$
is a normal element of $S_\alpha$, the even Clifford algebra of $S_\alpha/(f)$ is isomorphic to $M_{2^{(n-2)/2}}(k)^2$, and the stable category $\underline{\mathsf{CM}}^{\mathbb Z}(S_\alpha/(f))$ of graded maximal Cohen-Macaulay modules over $S_\alpha/(f)$ is triangle equivalent  to the derived category $\mathsf{D}^b(\mathsf{mod}\,k^2)$.

As a consequence, $S_\alpha/(f)$ is of finite Cohen-Macaulay representation type in both cases. These results demonstrate that $S_\alpha/(f)$ is a natural noncommutative generalization of the homogeneous coordinate ring of a smooth quadric hypersurface.
\end{abstract}

\maketitle

\section{Introduction}
The representation theory of maximal Cohen-Macaulay modules is a very active area of research (see e.g.\,\cite{Au, CR, Iy, LW, Yo}).
In particular, the stable categories $\uCM(A)$ of $\ZZ$-graded maximal Cohen-Macaulay modules over (commutative and noncommutative) graded Gorenstein rings $A$ have been extensively investigated, especially from the viewpoint of tilting theory (see e.g.\,\cite{AIR, BIY, Han, IKU, IT, KST1, KST2, KMY, LZ, MUs, MY}).

In \cite{SV}, Smith and Van den Bergh studied the stable categories of graded maximal Cohen-Macaulay modules over noncommutative quadric hypersurfaces, by developing the method of Buchweitz, Eisenbud, and Herzog \cite{BEH}.
In particular, one of the important results is that, for a noncommutative quadric hypersurface $A$, they constructed a finite-dimensional algebra $C(A)$ and established a triangle equivalence between $\uCM(A)$ and the bounded derived category of finite-dimensional modules over $C(A)$. Because $C(A)$ can be regarded as an analogue of the even Clifford algebra associated with a quadratic form, it is referred to as the \emph{even Clifford algebra} of $A$.
Since then, noncommutative quadric hypersurfaces have been intensively studied in noncommutative algebraic geometry, often through their even Clifford algebras.
In particular, it is known that if $C(A)$ is semisimple, then the algebra $A$ has particularly nice properties (see \cite[Theorem 5.5]{MUk}).

Let $k$ be an algebraically closed field of characteristic not equal to $2$.
The prototypical example of an even Clifford algebra $C(A)$ arises when $A=S/(f)$, where $S=k[x_1,\dots,x_n]$ with $\deg x_i=1$ and $f=x_1^2+\cdots+x_n^2 \in S$.
In this case, $A$ is the homogeneous coordinate ring of a smooth quadric hypersurface in $\PP^{n-1}$, and $C(A)$ is given as follows (see e.g.\,\cite{Lee}):
\begin{align}\label{e.commCA}
C(A) \cong 
\begin{cases} 
M_{2^{(n-1)/2}}(k) & \text{if $n$ is odd}, \\ 
M_{2^{(n-2)/2}}(k)^2 & \text{if $n$ is even}.
\end{cases}
\end{align}
This fact is closely related to Kn\"orrer periodicity \cite{Kn}.
Indeed, from this observation and the result of Smith and Van den Bergh, one obtains a triangle equivalence
\begin{align}\label{e.commsCM}
\uCM(A) \simeq 
\begin{cases} 
\D^b(\mod k) \simeq \uCM(k[x]/(x^2))  &\quad \text{if $n$ is odd}, \\ 
\D^b(\mod k^2) \simeq \uCM(k[x,y]/(x^2+y^2)) &\quad \text{if $n$ is even}.
\end{cases}
\end{align}

In view of the development of the theory of even Clifford algebras and Kn\"orrer periodicity for noncommutative quadric hypersurfaces, the following results are known:
\begin{itemize}
\item \cite{MUk} Let $S$ be a noetherian AS-regular algebra and $f \in S_2$ a regular normal element. 
Suppose that there exists a graded algebra automorphism $\si$ of $S$ such that $\si(f)=f$ and $af=f\si^2(a)$ for all $a \in S$, and define a graded algebra automorphism $\hat\si$ of the Ore extension $S[u;\si]$ by $\hat\si|_S=\si$ and $\hat\si(u)=u$. 
Then it was shown that there exists a triangle equivalence $\uCM(S/(f)) \simeq \uCM(S[u;\si][v;\hat{\si}]/(f+u^2+v^2))$.

\item \cite{HY} Let $S$ be a noetherian Koszul AS-regular algebra and $f \in S_2$ a regular central element. 
It was observed that $C(S/(f))$ is closely related to the $\ZZ_2$-graded Clifford deformation of the Koszul dual $(S/(f))^!$.

\item \cite{HMY} Let $S$ and $T$ be noetherian Koszul AS-regular algebras with regular central elements $f \in S_2$ and $g \in T_2$. 
Assume that $S \otimes T$ is noetherian. 
Then the algebra $C(S \otimes T/(f \otimes 1 + 1 \otimes g))$ was investigated.

\item \cite{LSW} Let $S$ be a noetherian Koszul AS-regular algebra and $f \in S_2$ a regular central element.
Then $C(S_P[u,v;\si]/(f+u^2+v^2))$ was investigated, where $S_P[u,v;\si]$ is a graded double Ore extension of $S$.

\item \cite{HMM} Let $S$ be a $3$-dimensional noetherian Koszul Calabi-Yau algebra and $f \in S_2$ a regular normal element.
In this setting, the classification of $C(S/(f))$ was given.

\item \cite{HMW} Let $S$ be a $3$-dimensional noetherian Koszul AS-regular algebra and $f \in S_2$ a regular central element.
In this setting, the classification of $C(S/(f))$ was given.
\end{itemize}

In this paper, we study even Clifford algebras $C(S_{\al}/(f))$ of \emph{skew quadric hypersurfaces} $S_{\al}/(f)$,
where $S_{\al}= k\langle x_1, \dots, x_n \rangle /(x_ix_j -\al_{ij} x_jx_i)$ is a standard skew polynomial algebra and $f$ is a homogeneous normal element of degree $2$. (In this case, $f$ is automatically a regular element.) 
Before presenting our results, we recall a theorem of Higashitani and the second author \cite{HU} as prior work.

Consider the element $f=x_1^2+\dots+x_{n}^2$ of a standard graded skew polynomial algebra $S_{\al}= k\langle x_1, \dots, x_n \rangle /(x_ix_j -\al_{ij} x_jx_i)$.
One can check that $f$ is a normal element if and only if $f$ is a central element if and only if 
\begin{align}\label{e.pm1}
\al_{ij} \in \{1,-1\} \quad \text{for all $1\leq i,j \leq n$}. 
\end{align}
Assume that \eqref{e.pm1} holds. We define an $(n+1) \times (n+1)$ matrix $X_\al=(X_{ij})$ with entries in $\mathbb{F}_2=\{0, 1\}$ by
\begin{align}\label{e.pm1m}
X_{ij}=
\begin{cases}
0 & \quad \text{if $i=j$ or if $\al_{ij}=-1$ with $i\neq j$},\\ 1 & \quad \text{otherwise}. 
\end{cases}
\end{align}
The following theorem states that $C(S_\al/(f))$ and $\uCM(S_\al/(f))$ can be computed using the matrix $X_\al$.

\begin{thm}[{\cite[Theorem 1.3]{HU}, \cite[Lemma 3.6]{Ue}}] \label{t.HU}
Let $S_{\alpha}= k\langle x_1, \dots, x_n \rangle /(x_ix_j -\al_{ij} x_jx_i)$ be a standard graded skew polynomial algebra in $n$ variables and let 
$f=x_1^2+\dots+x_{n}^2 \in S_\al$.
Assume that $f$ is normal, equivalently, that \eqref{e.pm1} holds and define $X_\al=(X_{ij}) \in M_{n+1}(\mathbb{F}_2)$ as in \eqref{e.pm1m}.
Let $\ell=\mnull_{\mathbb{F}_2} X_\al$, $q=2^\ell$, and $s=2^{(n-\ell-1)/2}$. Then $C(S_\al/(f)) \cong M_{s}(k)^{q}$. Furthermore, we have a triangle equivalence
$\uCM(S_\al/(f)) \simeq \D^b(\mod k^q)$.
\end{thm}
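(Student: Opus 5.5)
The plan is to first identify $C(S_\al/(f))$ with an explicit twisted group algebra of an elementary abelian $2$-group, then read off its Wedderburn decomposition from an alternating bilinear form over $\mathbb{F}_2$, and finally apply Smith--Van den Bergh. Throughout I read $X_\al$ as bordered: $X_{i,n+1}=X_{n+1,i}=1$ for $i\neq n+1$ and $X_{n+1,n+1}=0$. The case distinction in \eqref{e.pm1m} only governs $i,j\le n$, and this reading is what makes the size $n+1$ meaningful.

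\textbf{Step 1: a presentation of $C(A)$.} Let $A=S_\al/(f)$. Following Smith--Van den Bergh (equivalently \cite{HY} for central $f$), $C(A)\cong A^![w^{-1}]_0$. Here $A^!$ is the Koszul dual and $w\in A^!_2$ is the central regular element dual to $f$. Since $S_\al^!$ is generated by $y_1,\dots,y_n$ with relations $y_iy_j+\al_{ij}^{-1}y_jy_i=0$ for $i\neq j$, and $f=\sum x_i^2$, the algebra $A^!$ is obtained by imposing $y_i^2=w$ for all $i$. After inverting $w$ and taking degree $0$, $C(A)$ becomes the even part $E^{\mathrm{ev}}$ of the algebra $E$ with the following presentation: generators $e_1,\dots,e_n$, relations $e_i^2=1$, and $e_ie_j=-\al_{ij}e_je_i$ (using $\al_{ij}=\pm1$). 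Thus $E=k_c[\mathbb{F}_2^n]$ is a twisted group algebra. Its commutator bicharacter is $b(e_i,e_j)=X_{ij}$ for $i,j\le n$, since $e_i,e_j$ commute exactly when $\al_{ij}=-1$. Moreover $E^{\mathrm{ev}}=k_c[V]$ with $V=\{v\in\mathbb{F}_2^n : \sum v_i=0\}$, which has $\dim_k E^{\mathrm{ev}}=2^{n-1}$.

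\textbf{Step 2: structure of $k_c[V]$.} Since $\operatorname{char}k\neq 2$ and $k$ is algebraically closed, a twisted group algebra of a finite abelian group is semisimple (Maschke-type argument). Its center is spanned by the basis elements $e_v$ with $v$ in the radical $R=\{v\in V : b(v,u)=0 \text{ for all } u\in V\}$. Moreover it is a direct sum of $|R|$ copies of the same matrix algebra, because the $|R|$ central characters are permuted transitively by the grading twist. So $k_c[V]\cong M_s(k)^{q}$ with $q=|R|$ and $qs^2=2^{n-1}$.

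\textbf{Step 3: identify $R$ with the null space of $X_\al$.} This is the main computation. Since $V$ is spanned by the $e_i+e_j$, we have $v\in R$ iff $\sum v$ is even and $b(v,e_i)$ is independent of $i$. Call the common value $t$. The map $v\mapsto (v,t)$ then gives a bijection between $R$ and $\ker_{\mathbb{F}_2}X_\al$. Indeed, the first $n$ rows of $X_\al(v,t)^T=0$ read $b(v,e_i)+t=0$, and the last row reads $\sum v_j=0$. Hence $q=2^\ell$ with $\ell=\mnull X_\al$, and $s=2^{(n-1-\ell)/2}$. The parity of $n-1-\ell$ is automatic, since the form is nondegenerate on $V/R$. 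The delicate point to get right is the sign bookkeeping in Step 1: the precise dual relations and the sign in $e_ie_j=-\al_{ij}e_je_i$ must produce exactly $X_{ij}$ rather than its complement.

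\textbf{Step 4: the triangle equivalence.} The theorem of Smith--Van den Bergh gives $\uCM(A)\simeq \D^b(\mod C(A))$. Since $M_s(k)^q$ is Morita equivalent to $k^q$, we obtain $\uCM(A)\simeq\D^b(\mod k^q)$.
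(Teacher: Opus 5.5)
Your proof is correct, and it follows a different route from the one behind this statement. The paper does not reprove Theorem \ref{t.HU}; it cites \cite{HU} and \cite{Ue} and remarks that the result was obtained through the combinatorics of graphs. In that argument $\alpha$ is encoded by a graph on $n$ vertices, and $X_\alpha$ is the $\mathbb{F}_2$-adjacency matrix of the cone over that graph. Graph operations that do not change the stable category reduce the graph to a simple form, while the $\mathbb{F}_2$-nullity of $X_\alpha$ is tracked along the way.

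You instead identify $C(S_\alpha/(f))$ directly with the even part of the $\pm1$-twisted group algebra $k_c[\mathbb{F}_2^n]$. You then read off the Wedderburn decomposition from the radical of the commutator form restricted to the sum-zero subspace $V$. The key points check out:
\begin{enumerate}
\item The signs are right: $e_ie_j=-\alpha_{ij}e_je_i$, so $e_i$ and $e_j$ commute exactly when $\alpha_{ij}=-1$, and the commutator form is given by exactly $X_{ij}$.
\item Your bordered reading of $X_\alpha$ is what \eqref{e.pm1m} literally says, since $\alpha_{i,n+1}$ is undefined and so falls under ``otherwise''.
\item The linear bijection $v\mapsto (v,b(v,e_1))$ from $R$ onto $\ker X_\alpha$ is right; it uses the symmetry $\alpha_{ij}=\alpha_{ji}$ when $\alpha_{ij}=\pm1$.
\end{enumerate}

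What your route buys is a uniform, case-free argument that yields $q$ and $s$ at once. It also explains why the extra row and column appear: they encode the restriction to $V$. And it makes the parity of $n-1-\ell$ automatic, because the form is alternating. The graph-theoretic route instead gives explicit relations between the categories $\uCM(S_\alpha/(f))$ for different $\alpha$, and a combinatorial normal form.

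Two small points should be written out. First, justify the identification $A^![w^{-1}]_0\cong E^{\mathrm{ev}}$. The surjection together with the dimension count of Proposition \ref{p.dim} suffices. Alternatively, use the standard fact that a central invertible degree-$2$ element gives $A^![w^{-1}]/(w-1)\cong A^![w^{-1}]_0\oplus A^![w^{-1}]_1$. Second, Theorem \ref{t.SV} gives $\D^b(\mod C(A)^{\op})$, not $\D^b(\mod C(A))$. This is harmless here because $M_s(k)^q$ is isomorphic to its opposite algebra.
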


Note that Theorem \ref{t.HU} was proved using combinatorics of graphs.

We now turn our attention to the following isomorphism in the commutative case:
\begin{align} \label{e.mot}
k[x_1,\dots,x_n]/(x_1^2+\cdots+x_n^2) \cong
\begin{cases}
k[x_1,\dots,x_n]/(x_1x_2+\cdots+x_{n-2}x_{n-1}+x_n^2) &\text{if $n$ is odd},\\
k[x_1,\dots,x_n]/(x_1x_2+\cdots+x_{n-1}x_{n}) &\text{if $n$ is even}.
\end{cases}
\end{align}
However, this isomorphism does not hold in general when $k[x_1,\dots,x_n]$ is replaced by a skew polynomial ring $S_{\al}=k\langle x_1, \dots, x_n \rangle /(x_ix_j -\al_{ij} x_jx_i)$.
Since Theorem \ref{t.HU} can be regarded as a result concerning a noncommutative analogue of the left-hand side of \eqref{e.mot}, in this paper we focus on a noncommutative analogue of the right-hand side of \eqref{e.mot}. 
The main theorem of this paper is as follows.

\begin{thm}\label{t.intromain}
Let $S_{\al}= k\langle x_1, \dots, x_n \rangle /(x_ix_j -\al_{ij} x_jx_i)$ be a standard graded skew polynomial algebra in $n$ variables and let 
\[
f=
\begin{cases}
x_1x_2+\cdots+x_{n-2}x_{n-1}+x_n^2 &\text{if $n$ is odd},\\
x_1x_2+\cdots+x_{n-1}x_{n} &\text{if $n$ is even}.
\end{cases}
\]
in $S_\al$. Assume that $f$ is normal. Then
\begin{align*}
C(S_\al/(f)) \cong 
\begin{cases} 
M_{2^{(n-1)/2}}(k) & \quad \text{if $n$ is odd}, \\ 
M_{2^{(n-2)/2}}(k)^2 & \quad \text{if $n$ is even}.
\end{cases}
\end{align*}
Furthermore, we have a triangle equivalence
\begin{align*}
\uCM(S_\al/(f)) \simeq 
\begin{cases} 
\D^b(\mod k)  &\quad \text{if $n$ is odd}, \\ 
\D^b(\mod k^2) &\quad \text{if $n$ is even}.
\end{cases}
\end{align*}
\end{thm}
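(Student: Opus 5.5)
First I will determine exactly when $f$ is normal. In $S_\al$ every monomial is normal, and $x_i m = \al$-twisted$\, m x_i$. So $f$ is normal precisely when all of its monomials $x_{2i-1}x_{2i}$ (and $x_n^2$ when $n$ is odd) define the same twisting automorphism $x_j \mapsto c_j x_j$. Concretely, for each $j$ the scalar $\al_{j,2i-1}\al_{j,2i}$ must not depend on $i$, and must equal $\al_{jn}^2$ when $n$ is odd. This gives an explicit list of conditions on the $\al_{ij}$ for each pair of blocks $\{x_{2i-1},x_{2i}\}$, $\{x_{2l-1},x_{2l}\}$. I expect these to force $\al_{2i-1,2l-1}\al_{2i-1,2l}=\al_{2i,2l-1}\al_{2i,2l}=\lambda$ with a common value independent of the blocks.

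Next I will compute $C(A)$ for $A=S_\al/(f)$ directly from the Smith--Van den Bergh construction. Since $S_\al$ is Koszul, I take $A^!=S_\al^!/(\ldots)$ with the central-type element $w\in A^!_2$ dual to $f$. Then $C(A)=A^![w^{-1}]_0$. Here $S_\al^!$ is the skew exterior algebra with $y_iy_j=-\al_{ji}^{-1}y_jy_i$ and $y_i^2=0$, and the quadric $f$ deforms the relations $y_{2i-1}y_{2i}+\al\, y_{2i}y_{2i-1}$ to equal $w$. The algebra $C(A)$ has dimension $2^{n-1}$, with a basis given by the even monomials in $y_i$ times powers of $w^{-1}$. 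The natural generators are the elements $e_i=y_{2i-1}y_{2i}w^{-1}$, which should be idempotent-like, and $y_{2i-1}y_{2l}w^{-1}$, and so on.

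The plan is a Knörrer-type reduction. I will show that $C(A)\cong M_2(C(A'))$, where $A'$ is the analogous algebra in $n-2$ variables, obtained by deleting one hyperbolic pair $x_1,x_2$. The normality conditions restrict correctly to $A'$. The argument is as follows. Inside $C(A)$, the elements $p=y_1y_2w^{-1}$ and $1-p$ are orthogonal idempotents; this uses the relation $y_1y_2+c\,y_2y_1=w$ together with $y_1^2=y_2^2=0$. Moreover $p$ and $1-p$ are conjugate via elements built from $y_1,y_2$ times odd elements of the remaining variables. Then $pC(A)p\cong C(A')$, after a suitable rescaling of the generators of $A'$ by the twists $\al$. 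This gives $C(A)\cong M_2(pC(A)p)$.

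Iterating the reduction brings us down to the base cases. For $n=1$, $f=x_1^2$ and $C=k$. For $n=2$, $f=x_1x_2$ and $C=k^2$, as the computation shows directly: $C$ is spanned by $p$ and $1-p$. This yields $M_{2^{(n-1)/2}}(k)$ and $M_{2^{(n-2)/2}}(k)^2$. The triangle equivalence then follows from Smith--Van den Bergh, $\uCM(A)\simeq \D^b(\mod C(A))$, combined with Morita invariance.

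I expect the main obstacle to be the identification $pC(A)p\cong C(A')$. The odd generators $y_j$ for $j\ge3$ do not commute with $y_1y_2$ but only skew-commute, with the scalars $\al$. So I must use the normality constraints to show that conjugating by $p$ produces exactly the defining relations of $A'^!$ with the correct dual element $w'$. The alternative is to rescale the variables first to reduce to $\lambda=1$.
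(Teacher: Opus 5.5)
Your Kn\"orrer-type recursion $C(A)\cong M_2(C(A'))$ is a viable strategy and genuinely different from the paper's. As written, though, it has a gap: the step you yourself call the main obstacle, $pC(A)p\cong C(A')$, is only announced, and both heuristics you attach to it are wrong.

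First, normality does not force a common $\lambda$. By \eqref{e.normalo} and \eqref{e.normale}, for $l\neq i$ one has $\alpha_{2i-1,2l-1}\alpha_{2i-1,2l}=\alpha_{2i-1,2i}$ but $\alpha_{2i,2l-1}\alpha_{2i,2l}=\alpha_{2i-1,2i}^{-1}$. These vary with $i$: in the example of Section \ref{sec.ce}, $\alpha_{12}=\alpha^2$ and $\alpha_{34}=\beta^2$ are independent.

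Second, rescaling cannot help. Rescaling the $x_i$ leaves every $\alpha_{ij}$ unchanged. Moreover, if a degree-zero $d$ satisfied $dp=\lambda pd$ with $\lambda\in k^\times$, $\lambda\neq 1$, then $dp=dp^2=\lambda^2pd$ would force $pd=dp=0$, so $d$ would simply die in $pC(A)p$.

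(Minor slips: $A^!$ is a deformation of $S_\al^!$ with $S_\al^!=A^!/(w)$, not a quotient of it. Also, with the paper's pairing the skew exterior relations read $x_ix_j=-\alpha_{ji}x_jx_i$.)

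The missing idea is that degree-zero elements in the remaining variables commute with $p$ exactly. Write $x_1,\dots,x_n$ for the generators of $A^!$ as in Lemmas \ref{l.o1}(2) and \ref{l.e1}(2) (your $y_i$), and put $x_jw=\nu_j\,wx_j$.
\begin{itemize}
\item For $j\ge 3$ one has $x_jx_1x_2=\alpha_{1j}\alpha_{2j}\,x_1x_2x_j$. The normality conditions give $\alpha_{1j}\alpha_{2j}=\nu_j$; for example $\alpha_{1,2t}\alpha_{2,2t}=\alpha_{2t-1,2t}$, which equals $\alpha_{n,2t}^2$ in the odd case.
\item Also $\nu_w(x_1x_2)=x_1x_2$ holds automatically.
\item Hence $v\,x_1x_2=x_1x_2\,\nu_w(v)$ for every $v$ in the subalgebra generated by $x_3,\dots,x_n$, so each $vw^{-r}$ commutes with $p=x_1x_2w^{-1}$. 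The skew-commutation you noticed in $A^!$ is cancelled by that of $w$.
\end{itemize}

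Now finish as follows.
\begin{itemize}
\item The relations of $A'^!$ hold in $A^!$ with $w'\mapsto w$. This gives $\theta\colon C(A')\to C(A)$, and $c\mapsto p\,\theta(c)$ is an algebra map into $pC(A)p$.
\item It is surjective. $A^!$ is spanned by products $uv$ with $u\in\{1,x_1,x_2,x_1x_2\}$ and $v$ as above; use $x_2x_1=\alpha_{12}(w-x_1x_2)$ and the fact that $w$ lies in that subalgebra. Moreover $p\,uvw^{-r}p=0$ for $u=x_1,x_2$.
\item For $p\sim 1-p$ when $n$ is odd, take $a=x_1x_nw^{-1}$ and $b\in k^\times x_2x_nw^{-1}$.
\item When $n$ is even, take $a=x_1(x_{n-1}+x_n)w^{-1}$ and $b=x_2(c\,x_{n-1}+c'x_n)w^{-1}$ for suitable $c,c'\in k^\times$. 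Both terms are needed because $p$ splits further along the commuting idempotents $x_{n-1}x_nw^{-1}$ and $1-x_{n-1}x_nw^{-1}$.
\item Then $C(A)\cong M_2(pC(A)p)$. Proposition \ref{p.dim} gives $\dim pC(A)p=2^{n-3}=\dim C(A')$, so the surjection is an isomorphism.
\end{itemize}
With this, your recursion down to the base cases $n=1,2$ proves the theorem, and Theorem \ref{t.SV} plus Morita theory gives the triangle equivalence.

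The paper takes a different route. It writes down presentations of $C(A_\al)$ (Lemmas \ref{l.o1}(4), \ref{l.e1}(4)). It then builds an explicit isomorphism $C(A_{\textnormal{comm}})\to C(A_\al)$ by twisting generators with the correction factors $F^{(a)}_{sr}$, $G^{(a)}_{sr}$ (Theorems \ref{t.oiso}, \ref{t.eiso}). Finally it imports the classical computation \eqref{e.commCA}.

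Your route needs neither those presentations (in particular not the basis count in Lemma \ref{l.e1}(4)) nor the commutative input. It treats both parities uniformly and exhibits the result as Kn\"orrer periodicity at the level of even Clifford algebras. The paper's route, in exchange, gives a concrete generator-level identification of $C(A_\al)$ with the commutative even Clifford algebra.
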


Notice that we do not assume that $f$ is central.
Although Theorem \ref{t.HU} involves a triangulated category that does not appear in \eqref{e.commsCM}, Theorem \ref{t.intromain} yields the same conclusion as \eqref{e.commsCM}; in other words, it can be considered as a direct generalization of \eqref{e.commsCM}.

Since Theorem \ref{t.intromain} shows that 
$C(S_\al/(f))$ is semisimple, we obtain the following result.

\begin{cor}\label{c.intromain}
Let 
$S_\al$ and $f$ be as in Theorem \ref{t.intromain}, and assume that $f$ is normal. Then the following hold.

\begin{enumerate}
\item If $n$ is odd (resp.\,if $n$ is even), then $S_\al/(f)$ has exactly one (resp.\,two) non-projective graded maximal Cohen-Macaulay module(s), up to isomorphism and degree shift. In particular, $S_\al/(f)$ is of finite Cohen-Macaulay representation type.
\item The noncommutative projective scheme 
$\qgr S_\al/(f)$ associated to $S_\al/(f)$ in the sense of Artin-Zhang \cite{AZ} satisfies
$\gldim(\qgr S_\al/(f))=n-2$.
That is, $\qgr S_\al/(f)$ is smooth in the sense of Smith-Van den Bergh \cite{SV}. In particular, the derived category 
$\D^b(\qgr S_\al/(f))$ admits a Serre functor.
\end{enumerate}
\end{cor}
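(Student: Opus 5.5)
The plan is to deduce Corollary \ref{c.intromain} directly from Theorem \ref{t.intromain}, together with standard consequences of the Smith--Van den Bergh equivalence $\uCM(A)\simeq \D^b(\mod C(A))$ and the fact that $A=S_\al/(f)$ is a noetherian Koszul AS-Gorenstein algebra of dimension $n-1$. Here $f$ is a regular normal element of degree $2$ in the Koszul AS-regular algebra $S_\al$.

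For (1), Theorem \ref{t.intromain} gives $\uCM(A)\simeq \D^b(\mod k)$ when $n$ is odd and $\uCM(A)\simeq\D^b(\mod k^2)$ when $n$ is even. Since $k$ and $k^2$ are semisimple, every object of $\D^b(\mod k^q)$ is a finite direct sum of shifts $S[i]$ of the simple modules $S$. The indecomposable objects up to shift are therefore exactly the $q$ simple modules, where $q=1$ or $2$. Next we must match the suspension $[1]$ with a degree shift on graded modules. In $\uCM(A)$ the suspension is $\Omega^{-1}$. For a quadric hypersurface the matrix factorization of $f$ gives $\Omega^2M\cong M(-2)$. So $\Omega$ acts, up to a degree shift, as an operation on indecomposables of period $\le 2$. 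Under the Smith--Van den Bergh equivalence the grading shift $(1)$ corresponds to a functor which, on $\D^b(\mod C(A))$, is a composite of $[\pm1]$ and the twist by an automorphism of $C(A)$. Hence each indecomposable non-projective graded MCM module corresponds to a simple $C(A)$-module up to $[i]$. Because $[i]$ corresponds to $\Omega^{-i}$, and $\Omega^{-1}$ is realized by degree shifts up to the finite $\Omega$-period, the objects differ only by degree shift. This yields exactly one, respectively two, non-projective indecomposables up to isomorphism and degree shift. Every graded MCM module is a finite direct sum of indecomposables by Krull--Schmidt in $\grmod A$, so $A$ has finite CM representation type.

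For (2), we use the criterion from \cite{SV} (also \cite[Theorem 5.5]{MUk}): for a noncommutative quadric hypersurface $A$, $\qgr A$ has finite global dimension, namely $\gldim \qgr A=n-2$, if and only if $C(A)$ is semisimple. By Theorem \ref{t.intromain}, $C(A)$ is a product of full matrix algebras over $k$, hence semisimple, which gives $\gldim(\qgr A)=n-2$. Smoothness in the sense of \cite{SV} is by definition finite global dimension of $\qgr A$. The existence of a Serre functor then follows from the standard result for $\qgr$ of a noetherian AS-Gorenstein algebra of finite global dimension: $\D^b(\qgr A)$ is Hom-finite with Serre functor $-\otimes\omega[n-2]$, as in \cite{SV}, \cite{MUk}.

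The main obstacle is the bookkeeping in (1): making sure that shifts $[i]$ in the derived category correspond to degree shifts rather than producing new isoclasses. I would settle this using the relation $\Omega^{2}M\cong M(-2)$ coming from $f$, or simply cite the known consequence, stated in \cite{SV} and \cite{MUk}, that semisimple $C(A)$ implies the number of indecomposable non-projective graded MCM modules up to shift equals the number of simple $C(A)$-modules.
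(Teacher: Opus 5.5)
Your part (2), and the citation you fall back on in (1), coincide with the paper's proof. The paper obtains (1) from the proof of \cite[Theorem 5.5]{MUk}: for semisimple $C(A)$, the non-projective indecomposables up to isomorphism and degree shift are in bijection with the simple $C(A)$-modules. It obtains (2) from \cite[Theorem 5.5]{MUk} (finite global dimension of $\qgr A$) together with \cite[Theorem A.4]{dV}, which supplies both the value $n-2$ and the Serre functor.

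The self-contained argument you lead with in (1), however, does not work as written. Used as a black box, Theorem~\ref{t.SV} says nothing about degree shifts, and the count depends entirely on how $(1)$ transports across the equivalence. You need that $(1)$ corresponds to $[\pm1]$ composed with an autoequivalence induced by an \emph{exact} autoequivalence of the module category of $C(A)$. Then $(1)$ acts on $\{\text{simples}\}\times\ZZ$ by $(j,i)\mapsto(\pi(j),i\pm1)$, and each orbit meets $\{(j,0)\}$ exactly once. This gives exactly $q$ classes, and it also shows that distinct simples are not degree shifts of one another, a point your sketch never addresses. Were the cohomological part $[0]$ or $[\pm2]$, you would get infinitely many, respectively $2q$, classes.

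This compatibility has to be extracted from the construction of the equivalence. Koszul duality turns $(1)$ into an internal shift combined with $[\pm1]$. Under $\grmod A^![w^{-1}]\simeq\mod C(A)$, the internal shift becomes tensoring with an invertible $C(A)$-bimodule, namely an odd-degree component of $A^![w^{-1}]$. Simply asserting the compatibility is not a proof.

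Your proposed substitute via $\Omega$ does not close the gap, for two reasons.
\begin{itemize}
\item The relation $\Omega^2M\cong M(-2)$ holds for central $f$. For the merely normal $f$ treated here, one in general only has $\Omega^2M\cong M_\sigma(-2)$, a twist by the normalizing automorphism.
\item The claim that ``$\Omega^{-1}$ is realized by degree shifts'' is false in the even case. Already for $n=2$ one has $\Omega(A/x_1A)\cong(A/x_2A)(-1)$, so $[1]$ carries one simple to a shift of the other.
\end{itemize}
The count still comes out right in that example, but only because of the orbit structure described above. So either derive the $[\pm1]$ compatibility from the construction, or rely on the citation, as the paper does.
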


This paper is organized as follows.
In Section \ref{sec.pre}, we fix notation and review background material needed for the proof of Theorem \ref{t.intromain}.
Sections \ref{sec.odd} and \ref{sec.even} contain the proof of Theorem \ref{t.intromain} in the cases of odd and even numbers of variables, respectively.
In Section \ref{sec.ce}, we prove Corollary \ref{c.intromain} and give an example.

\section{Preliminaries} \label{sec.pre}

Throughout this paper, let $k$ be an algebraically closed field with $\operatorname{char} k \neq 2$. All vector spaces and algebras are over $k$.
For a ring $R$, let $R^{\op}$ denote the opposite algebra of $R$.
We denote by $\mod R$  the category of finitely generated right $R$-modules, and we identify $\mod R^{\op}$ with the category of finitely generated left $R$-modules. 

\subsection{Graded maximal Cohen-Macaulay modules}
Let $A$ be a connected graded algebra, that is, $A=\bigoplus_{i \in \NN} A_i$ with $A_0 =k$.
We write $\GrMod A$ for the category of graded right $A$-modules $M=\bigoplus_{i\in\ZZ}M_i$ and degree-preserving $A$-module homomorphisms. For $M \in \GrMod A$ and $j \in \ZZ$,
we define the \emph{shift} $M(j) \in \GrMod A$ by $M(j)_i = M_{j+i}$.
For $M, N\in \GrMod A$, we define $\Ext^i_A(M, N):=\bigoplus _{j \in \ZZ}\Ext^i_{\GrMod A}(M, N(j))$.

The following classes of algebras play a central role in noncommutative algebraic geometry. 

\begin{dfn} 
Let $A$ be a noetherian connected graded algebra. We say that $A$ is \emph{AS-regular} (resp.\,\emph{AS-Gorenstein}) of dimension $n$ if
\begin{enumerate}
\item $\gldim A = n < \infty$ (resp.\,$\injdim_A A = \injdim_{A^{\op}} A = n < \infty$), and
\item $\Ext^i_A(k ,A) \cong \Ext^i_{A^{\op}}(k ,A) \cong
\begin{cases}
0 & \text{if } i \neq n,\\
k(\ell) \text{ for some } \ell \in \ZZ & \text{if } i = n.
\end{cases}$
\end{enumerate}
\end{dfn}

\begin{ex}
A \emph{standard graded skew polynomial algebra} in $n$ variables is a graded algebra
\[ S_\al = k\langle x_1, \dots, x_n \rangle /(x_ix_j -\al_{ij} x_jx_i \mid 1\leq i,j \leq n) \]
with $\deg x_i=1$ for all $1\leq i \leq n$, where $\alpha =(\alpha_{ij}) \in M_n(k)$ is a matrix satisfying $\alpha_{ii}=\alpha_{ij}\alpha_{ji}=1$ for all $1\leq i,j \leq n$.
It is well-known that such an algebra is a noetherian Koszul AS-regular domain of dimension $n$.
\end{ex}

Let $A$ be a noetherian AS-Gorenstein algebra.
We denote by $\grmod A$ the full subcategory of $\GrMod A$ consisting of finitely generated graded modules.
A graded module $M \in \grmod A$ is called \emph{maximal Cohen-Macaulay} if $\Ext^i_A(M, A)=0$ for all $i>0$.
Let $\CM(A)$ denote the full subcategory of $\grmod A$ consisting of graded maximal Cohen-Macaulay modules. Then $\CM(A)$ is a Frobenius category.
The \emph{stable category} of graded maximal Cohen-Macaulay modules, denoted by $\uCM(A)$, has the same objects as $\CM(A)$,
and the morphism space is given by
\[ \Hom_{\uCM(A)}(M, N) = \Hom_{\CM(A)}(M,N)/P(M,N), \]
where $P(M,N)$ is the subspace of degree-preserving $A$-module homomorphisms that factor through a graded projective module. Since $\CM(A)$ is a Frobenius category, $\uCM(A)$ admits the canonical structure of a triangulated category (see \cite{Hap}).

\subsection{Even Clifford algebras of noncommutative quadric hypersurfaces}

In this subsection, we recall the even Clifford algebras of noncommutative quadric hypersurfaces. Although these algebras were originally introduced by Smith and Van den Bergh \cite{SV}, for the purpose of this work we present a slightly generalized version due to Mori and the second author \cite{MUk}.

We first fix some basic notation. Let $A=T(V)/(R)$ be a quadratic algebra, where $T(V)=\bigoplus_{i\in\NN} V^{\otimes i}$ is the tensor algebra on a finite-dimensional vector space $V$, and $R$ is a subspace of $T(V)_2=V\otimes_k V$.
Then the \emph{quadratic dual} $A^!$ of $A$ is defined as $T(V^*)/(R^\perp)$, where $V^*$ is the $k$-linear dual of $V$, and $R^\perp=\{\mu \in  T(V^*)_2=V^*\otimes_k V^* \mid \mu(r)=0\ \text{for all}\ r \in R \}$. Note that we identify $(V\otimes_k V)^* \cong V^*\otimes_k V^*$ via $(\psi_1\otimes \psi_2)(v_1\otimes v_2)=\psi_1(v_1)\psi_2(v_2)$ for $\psi_1,\psi_2 \in V^*, v_1,v_2\in V$.

A connected graded algebra $A$ is called \emph{Koszul} if $k \in \GrMod A$ has a linear free resolution.
Suppose $A$ is Koszul. Then it is well-known that $A$ is quadratic, $A^!$ is also Koszul, and $A^!$ is isomorphic to the Yoneda algebra $(\bigoplus_{i\in\NN}\Ext_A^i(k,k))^{\op}$.
In this case, $A^!$ is also called the \emph{Koszul dual} of $A$.

\begin{dfn}\label{dfn.qh}
A connected graded algebra $A$ is called a \emph{noncommutative quadric hypersurface (ring)} of dimension $n-1$ if $A$ is of the form $A=S/(f)$, where $S$ is a noetherian Koszul AS-regular algebra of dimension $n \geq 1$ and $f \in S$ is a homogeneous regular normal element of degree $2$.
\end{dfn}

Let $S_{\al}=k\langle x_1, \dots, x_n \rangle /(x_ix_j -\al_{ij} x_jx_i)$ be a standard skew polynomial algebra, and let $f\in S_{\al}$ be a homogeneous normal element of degree $2$. Since $S_\al$ is a domain, $f$ is a regular element. We call $A=S_{\al}/(f)$ a \emph{skew quadric hypersurface}.
This algebra is the main object of study in this paper.

\begin{prop}\label{p.w}
Let $A=S/(f)$ be a noncommutative quadric hypersurface of dimension $n-1$.
\begin{enumerate}
\item \textnormal{(\cite[Lemma 2.4]{MUk})} $A$ is a noetherian Koszul AS-Gorenstein algebra of dimension $n-1$.
\item \textnormal{(\cite[Corollary 1.4]{ST})} There exists a homogeneous regular normal element $w \in A^!$ of degree $2$ such that $A^!/(w)=S^!$.
\end{enumerate}
\end{prop}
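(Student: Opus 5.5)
For part (1) I would argue directly from the hypotheses on $S$ and $f$. Since $f$ is a homogeneous normal element, there is a graded automorphism $\nu$ of $S$ with $af=f\nu(a)$ for all $a\in S$. Noetherianity of $A=S/(f)$ follows because $A$ is a quotient of the noetherian ring $S$.

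For the Gorenstein property I would use the graded Rees lemma. Since $f$ is regular and normal, for every graded $A$-module $M$ there is an isomorphism
\[
\Ext^{i+1}_S(M,S)\cong \Ext^i_A(M,A)
\]
up to a degree shift and a twist by $\nu$. Taking $M=k$, and doing the same on the left, the AS-regularity conditions on $S$ in dimension $n$ become exactly the AS-Gorenstein conditions on $A$ in dimension $n-1$. The bound $\injdim A=n-1$ comes from the same isomorphism applied to all $M\in\grmod A$.

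For Koszulity, the first step is the Hilbert series: $H_A(t)=(1-t^2)H_S(t)$, since $f$ is regular of degree $2$. Then I would build a linear free resolution of $k_A$ from the Koszul resolution $P_\bullet$ of $k_S$. Tensor $P_\bullet$ with $A$, and use the normality of $f$ to construct a degree-$2$ null-homotopy of multiplication by $f$ on $P_\bullet\otimes_S A$. This is the standard Shamash/Eisenbud construction, twisted by $\nu$. It yields a resolution with terms $\bigoplus_j (P_{i-2j}\otimes_S A)(-2j)$. The generators of $P_{i-2j}$ sit in degree $i-2j$, so this term is generated in degree $i$, and the resolution is linear. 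Its exactness follows from the regularity of $f$.

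For part (2), write $A=T(V)/(R')$ with $R'=R+k\tilde f$, where $\tilde f\in V\otimes V$ is a lift of $f$. Then $R'^\perp\subset R^\perp$ has codimension one, so the natural surjection $A^!\to S^!$ has kernel generated by the image $w$ of any element of $R^\perp\setminus R'^\perp$. This gives $A^!/(w)=S^!$.

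It remains to show that $w$ is normal and regular in $A^!$. For regularity I would compare Hilbert series. Koszul duality for $A$ and for $S$ gives
\[
H_{A^!}(t)=\frac{1}{H_A(-t)}=\frac{H_{S^!}(t)}{1-t^2}.
\]
Hence the ideal $(w)$ has Hilbert series $t^2H_{A^!}(t)$. Once $w$ is known to be normal, $(w)=wA^!$, so left multiplication $A^!(-2)\to (w)$ is surjective. Two graded spaces with the same finite-dimensional graded pieces are then isomorphic, so this map is injective, which gives regularity.

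Normality is the step I expect to be the main obstacle. My first attempt would be to dualize the twisting: I would show that $w\xi-\nu^*(\xi)w$ vanishes in $A^!_3$ for $\xi\in V^*$. To do this I would check the pairing of this element against $(R'\otimes V)\cap(V\otimes R')$, using the normality relation $f\nu(a)=af$ lifted to $V^{\otimes3}$.

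If this computation becomes unwieldy, I would instead identify $A^!$ with the Yoneda algebra $\Ext_A(k,k)^{\op}$. In that picture, $w$ is the Eisenbud operator coming from the change-of-rings spectral sequence for $S\to A$. That sequence degenerates by the Hilbert series count above, and the naturality of the operator with respect to Yoneda products gives the normality of $w$.
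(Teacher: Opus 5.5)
The paper does not prove this proposition itself: part (1) is quoted from \cite[Lemma 2.4]{MUk} and part (2) from \cite[Corollary 1.4]{ST}. Your proposal is a self-contained reconstruction of both, which is a genuinely different route, and it is sound.

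\begin{itemize}
\item Rees' lemma gives the AS-Gorenstein property.
\item The codimension-one inclusion $R'^\perp\subset R^\perp$ gives $A^!/(w)=S^!$.
\item The identity $H_{A^!}(t)=H_{S^!}(t)/(1-t^2)$ gives regularity of $w$ on both sides, once $wA^!=A^!w$ is known.
\end{itemize}

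\textbf{Normality.} Your first plan works, so the Yoneda fallback is unnecessary. The one fact you must record is that $(R'\otimes V)\cap(V\otimes R')$ is spanned by $(R\otimes V)\cap(V\otimes R)$ together with elements
\[
u_v=\tilde f\otimes\nu(v)+a_v=v\otimes\tilde f+b_v, \qquad a_v\in R\otimes V,\ b_v\in V\otimes R,
\]
which lift $vf=f\nu(v)$. This follows from regularity of $f$: an equality $\tilde f\otimes v'+a=v\otimes\tilde f+b$ forces $fv'=vf=f\nu(v)$, hence $v'=\nu(v)$. Alternatively, it follows from your count $\dim A^!_3=\dim S^!_3+\dim V$. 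Normalize $\tilde w(\tilde f)=1$. Then $\tilde w\otimes\xi$ and $(\xi\circ\nu)\otimes\tilde w$ both vanish on $(R\otimes V)\cap(V\otimes R)$, and both take the value $\xi(\nu(v))$ on $u_v$. Hence $w\xi=(\xi\circ\nu)w$ in $A^!_3$.

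\textbf{Koszulity.} Your paragraph on this is garbled as written. Multiplication by $f$ on $P_\bullet\otimes_S A$ is zero, so the homotopies must live on $P_\bullet$, and a Shamash construction needs a full system of twisted higher homotopies, not a single one. A quicker argument is the two-row change-of-rings spectral sequence. Since $\operatorname{Tor}^S_q(A,k)$ is $k$ for $q=0$, $k(-2)$ for $q=1$, and $0$ otherwise, it yields exact sequences
\[
\operatorname{Tor}^S_i(k,k)\to\operatorname{Tor}^A_i(k,k)\to\operatorname{Tor}^A_{i-2}(k,k)(-2).
\]
Linearity of $\operatorname{Tor}^A_i(k,k)$ then follows by induction on $i$.

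\textbf{Comparison.} The citation route is of course shorter. Yours buys explicit information: the normalizing automorphism of $w$ acts on $A^!_1=V^*$ as the inverse transpose of $\nu|_V$. This is exactly what the paper verifies by hand in Lemma \ref{l.o1}(3) and Lemma \ref{l.e1}(3).
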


Let $A=S/(f)$ be a noncommutative quadric hypersurface.
Take a regular normal element $w \in A^!_2$ as in Proposition \ref{p.w}(2).
Then there exists a graded algebra automorphism $\nu_w$ of $A^!$ such that $aw = w\nu_w (a)$ for all $a \in A^!$. We call $ \nu_w$ the \emph{normalizing automorphism} of $w$. Then we obtain the $\ZZ$-graded localization $A^![w^{-1}]$, whose elements are written in the form $aw^{-i}$ with $a \in A^!$ and $i \in \NN$, and whose algebra structure is given by
\begin{itemize}
	\item (addition) $aw^{-i}+a'w^{-j}=(aw^j+a'w^i)w^{-i-j}$
	\item (multiplication) $(aw^{-i})(a'w^{-j})=a\nu_w^i(a')w^{-i-j}$
	\item (grading) $\deg(aw^{-i})=\deg a-2i$
\end{itemize}
for all $a,a'\in A^!$ and $i,j \in \NN$.

\begin{dfn}
With the above notation, for a noncommutative quadric hypersurface $A=S/(f)$, the \emph{even Clifford algebra} of $A$ is defined as
\[ C(A) := A^![w^{-1}]_0.\]
\end{dfn}

\begin{prop}[{\cite[Lemma 4.13(1)]{MUk}}]\label{p.dim}
Let $A=S/(f)$ be a noncommutative quadric hypersurface of dimension $n-1$. Suppose that $S$ has Hilbert series $(1-t)^{-n}$.
Then $\dim_k C(A)=2^{n-1}$.
\end{prop}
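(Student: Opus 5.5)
The plan is to compute $\dim_k C(A)$ directly from Hilbert series, using Koszul duality and the regularity of $w$ from Proposition \ref{p.w}. The idea is that $C(A)=A^![w^{-1}]_0$ is the direct limit of the even graded pieces $A^!_{2i}$ along multiplication by $w$, and that these dimensions stabilize at $2^{n-1}$.

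\emph{Hilbert series of $A^!$.} Since $f\in S_2$ is regular, $H_A(t)=(1-t^2)H_S(t)=(1-t^2)(1-t)^{-n}$. By Proposition \ref{p.w}(1), $A$ is Koszul, so $H_{A^!}(t)=1/H_A(-t)$. This gives
\[
H_{A^!}(t)=\frac{(1+t)^n}{1-t^2}.
\]
As a consistency check, this agrees with Proposition \ref{p.w}(2): since $w\in A^!_2$ is regular and $A^!/(w)=S^!$, we have $H_{S^!}(t)=(1-t^2)H_{A^!}(t)=(1+t)^n$. Expanding, $\dim_k A^!_{2i}=\sum_{0\le j\le i}\binom{n}{2j}$. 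This equals $\sum_{j}\binom{n}{2j}=2^{n-1}$ as soon as $2i\ge n$.

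\emph{Describing the degree-zero part.} Every element of $A^![w^{-1}]_0$ has the form $aw^{-i}$ with $a\in A^!_{2i}$. Consider the maps $\phi_i\colon A^!_{2i}\to C(A)$, $a\mapsto aw^{-i}$. Since $aw^{-i}=(aw)w^{-i-1}$, the image of $\phi_i$ is contained in the image of $\phi_{i+1}$, and $C(A)=\bigcup_i \Im\phi_i$. Each $\phi_i$ is injective: if $aw^{-i}=0$ in the localization, then $aw^m=0$ for some $m$, and regularity of $w$ forces $a=0$. The localization is well defined because $w$ is normal with normalizing automorphism $\nu_w$, so $\{w^m\}$ is an Ore set. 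Hence $C(A)$ is the increasing union of subspaces isomorphic to $A^!_{2i}$, and
\[
\dim_k C(A)=\lim_{i\to\infty}\dim_k A^!_{2i}=2^{n-1}.
\]

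\emph{Expected obstacle.} The only delicate point is the rigorous description of $A^![w^{-1}]$: checking that the Ore localization at the powers of the normal regular element $w$ has the stated elements, that $aw^{-i}=0$ implies $a=0$, and that the chain $\Im\phi_i$ exhausts the degree-zero part. The rest is the binomial identity, which is routine.
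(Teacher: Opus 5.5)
Your proof is correct: the Hilbert-series computation $H_{A^!}(t)=(1+t)^n/(1-t^2)$ gives $\dim_k A^!_{2i}=2^{n-1}$ for $2i\ge n$. Because $w$ is regular, each $\phi_i$ is injective, and the chain $\Im\phi_i\subseteq\Im\phi_{i+1}$ of subspaces of equal finite dimension stabilizes, so $C(A)=\Im\phi_i\cong A^!_{2i}$ for $2i\ge n$. The paper does not prove this itself but cites \cite[Lemma 4.13(1)]{MUk}, and your argument (Koszul duality for Hilbert series plus regularity of $w$) is essentially the standard proof behind that citation.
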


The following theorem shows the importance of the even Clifford algebras.

\begin{thm}[{\cite[Lemma 4.13(4)]{MUk}; see also \cite[Proposition 5.2]{SV}}]\label{t.SV}
Let $A=S/(f)$ be a noncommutative quadric hypersurface.
Then we have a triangle equivalence
\[
\uCM(A) \simeq \D^b(\mod C(A)^{\op}),
\]
where $\D^b(\mod C(A)^{\op})$ is the bounded derived category of $\mod C(A)^{\op}$.
\end{thm}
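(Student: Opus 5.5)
The plan is to follow the Buchweitz / Koszul duality route of Smith and Van den Bergh: pass from $\uCM(A)$ to the graded singularity category of $A$, transport it across Koszul duality to a Verdier quotient for $A^!$, identify that quotient with $\qgr$ of $A^!$, and then use the localization $A^![w^{-1}]$ together with Dade's theorem to land in modules over $C(A)$.

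\textbf{Step 1 (Buchweitz).} By Proposition \ref{p.w}(1), $A$ is noetherian AS-Gorenstein. Buchweitz's theorem, in its graded version, then gives a triangle equivalence
\[
\uCM(A)\simeq \D^b(\grmod A)/\mathsf{thick}\{A(i)\mid i\in\ZZ\}.
\]

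\textbf{Step 2 (Koszul duality).} $A$ is Koszul, and $A^!$ is finitely generated over the subalgebra $k[w]$ generated by the regular normal element $w$ of Proposition \ref{p.w}(2). Indeed $A^!/(w)=S^!$ is finite dimensional, since $S$ is AS-regular. In this situation the Beilinson--Ginzburg--Bernstein type Koszul duality functor gives a duality (or equivalence, after $k$-linear dualizing) between $\D^b(\grmod A)$ and $\D^b(\grmod A^!)$. Under it:
\begin{itemize}
\item the free modules $A(i)$ correspond to the simple modules $k(i)$ over $A^!$, and vice versa;
\item hence $\mathsf{thick}\{A(i)\}$ corresponds to the finite-dimensional (torsion) complexes over $A^!$.
\end{itemize}
Since $A^!$ is noetherian, this yields
\[
\uCM(A)\simeq \D^b(\grmod A^!)/\D^b_{\mathrm{fd}}(\grmod A^!)\simeq \D^b(\qgr A^!),
\]
possibly with left and right modules interchanged. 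That interchange is the source of the ``$\op$'' in the final statement, and I will track it carefully.

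\textbf{Step 3 (localizing at $w$).} Because $w$ is regular and normal and $A^!$ is module-finite over $k[w]$, a finitely generated graded module is torsion exactly when it is killed by a power of $w$. Localization $M\mapsto M\otimes_{A^!}A^![w^{-1}]$ is exact, since $\{w^i\}$ is an Ore set. It therefore induces an equivalence
\[
\qgr A^!\simeq \grmod A^![w^{-1}].
\]

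\textbf{Step 4 (strong grading).} Put $B=A^![w^{-1}]$. I claim $B$ is strongly $\ZZ$-graded. Since $A^!$ is generated in degree $1$, we have $w\in A^!_1A^!_1$, so
\[
1=ww^{-1}\in A^!_1\cdot(A^!_1w^{-1})\subseteq B_1B_{-1}.
\]
Thus $B_1B_{-1}=B_0$, and similarly $B_{-1}B_1=B_0$. By Dade's theorem, $M\mapsto M_0$ gives an equivalence $\grmod B\simeq \mod B_0=\mod C(A)$ (for the appropriate side). Combining this with Steps 1--3 gives $\uCM(A)\simeq \D^b(\mod C(A)^{\op})$.

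\textbf{Main obstacle.} I expect the hardest step to be Step 2. One must check that the Koszul duality functor restricts to bounded derived categories of finitely generated modules, which uses the finiteness of $A^!$ over $k[w]$ and of $A$ over $S$. One must also check that it sends $\mathsf{thick}\{A(i)\}$ exactly onto the finite-dimensional objects. My approach there is to compare both subcategories via their generators $A(i)\leftrightarrow k(i)$ and use that the two sides are thick closures of these generators. I will also verify the left/right bookkeeping at each step, to confirm that the result is $C(A)^{\op}$ rather than $C(A)$.
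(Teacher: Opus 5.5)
Your outline is correct and follows the route of the results the paper quotes for this theorem; the paper gives no proof of its own, only \cite[Lemma 4.13(4)]{MUk} and \cite[Proposition 5.2]{SV}. That route is: Buchweitz's theorem plus Koszul duality give $\uCM(A)\simeq\D^b(\qgr A^!)$, with Koszul duality switching the module side, which is where the $\op$ comes from; then $\qgr A^!\simeq\grmod A^![w^{-1}]\simeq\mod C(A)$ because $A^![w^{-1}]$ is strongly graded, exactly as in your Steps 3 and 4.

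The only non-routine point is the one you flagged in Step 2. Matching the generators $A(i)\leftrightarrow k(i)$ identifies the two thick subcategories only once you already know that Koszul duality restricts to an equivalence between $\D^b(\grmod A)$ and $\D^b(\grmod A^!)$. That restriction is a genuine finiteness statement: finitely generated modules over $A$ and over $A^!$ must have finite Castelnuovo--Mumford regularity. You should quote it from the Koszul duality literature; it does not follow from the generator comparison.
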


\section{Proof of Theorem \ref{t.intromain}: the case of an odd number of variables}\label{sec.odd}

This section is devoted to proving Theorem \ref{t.intromain} in the case where the number of variables is odd. Throughout this section,

\begin{itemize}
\item $n:=2m+1$ with $m \geq 0$,
\item $S_\al = k\langle x_1, \dots, x_{n} \rangle /(x_ix_j -\al_{ij} x_jx_i)$ is a standard graded skew polynomial algebra in $n=2m+1$ variables, 
\item $f:=x_1x_2+\cdots+x_{2m-1}x_{2m}+x_{n}^2 \in S_\al$,
\item $A_\al :=S_\al /(f)$,
\item $A_{\textnormal{comm}}:=k[x_1,\dots,x_n]/(x_1x_2+\cdots+x_{2m-1}x_{2m}+x_n^2)$,
that is, this is the special case of $S_\al /(f)$ where $\al_{ij}=1$ for all $1\leq i,j\leq n$.
\end{itemize}

\begin{lem}\label{l.normalo}
The element $f \in S_\al$ is normal if and only if 
\begin{align}\label{e.normalo}
\al_{2t-1,2s}\al_{2t,2s}=\al_{2s-1,2t-1}\al_{2s-1,2t}=\al_{2s-1,2s}=\al_{2s-1,n}^2 \quad \text{and} \quad
\al_{n,2s-1}\al_{n,2s}=1
\end{align}
for all $1\leq s,t\leq m$ with $s\neq t$.
\end{lem}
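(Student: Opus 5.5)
The plan is to use that $S_\al$ is a $\ZZ^n$-graded domain with the PBW basis of ordered monomials $x_1^{a_1}\cdots x_n^{a_n}$, and that each $x_i$ is normal: for a monomial $g$ we have $x_i g = \chi_i(g)\, g\, x_i$ for a scalar $\chi_i(g)$, multiplicative in $g$ and given on generators by $\chi_i(x_j)=\al_{ij}$. Since $f$ is homogeneous of degree $2$ and $S_\al$ is a domain, $f$ is normal if and only if for every $i$ there is $y_i\in (S_\al)_1$ with $x_i f = f y_i$; left normality then follows from right normality by a dimension count, or by the symmetric argument. We first show that $y_i$ must be a scalar multiple of $x_i$. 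Compute
\[
x_i f=\sum_{s=1}^m \chi_i(x_{2s-1}x_{2s})\,x_{2s-1}x_{2s}x_i+\chi_i(x_n^2)\,x_n^2x_i .
\]
For $y=\sum_j c_j x_j$, the product $fy$ has the term $x_n^2x_j$ with coefficient $c_j$. Comparing the $\ZZ^n$-multidegree $2e_n+e_j$ with $j\ne i$, the only monomial of $x_if$ in that multidegree would need $\{i\}\cup\{n,n\}=\{n,n,j\}$, which is impossible unless $i=n$, $j=n$. Hence $c_j=0$ for $j\neq i$, so $y_i=c_i x_i$ and $x_if=\lambda_i f x_i$.

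Next we compare coefficients in the PBW basis. The relation becomes $\sum_s \chi_i(x_{2s-1}x_{2s})\,x_{2s-1}x_{2s}x_i+\chi_i(x_n)^2x_n^2x_i=\lambda_i f x_i$. The monomials $x_{2s-1}x_{2s}x_i$ ($s=1,\dots,m$) and $x_n^2x_i$ have distinct multidegrees and are nonzero, so they are linearly independent. Thus normality is equivalent to: for each $i$, the scalars $\al_{i,2s-1}\al_{i,2s}$ for $s=1,\dots,m$ and $\al_{in}^2$ all coincide. Note that $f$ is central precisely when all these scalars equal $1$.

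Finally, we rewrite these conditions by cases on $i$, using $\al_{ji}=\al_{ij}^{-1}$, to match \eqref{e.normalo}. For $i=n$ we get $\al_{n,2s-1}\al_{n,2s}=\al_{nn}^2=1$ for all $s$, which is the second condition. For $i=2t-1$, the $s=t$ term is $\al_{2t-1,2t-1}\al_{2t-1,2t}=\al_{2t-1,2t}$. For $s\ne t$ the term is $\al_{2t-1,2s-1}\al_{2t-1,2s}$, and the last scalar is $\al_{2t-1,n}^2$. This gives the chain $\al_{2s-1,2t-1}\al_{2s-1,2t}=\al_{2s-1,2s}=\al_{2s-1,n}^2$ after renaming $s\leftrightarrow t$. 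For $i=2t$, the $s=t$ term is $\al_{2t,2t-1}=\al_{2t-1,2t}^{-1}$, and the others are $\al_{2t,2s-1}\al_{2t,2s}$ and $\al_{2t,n}^2$. Inverting gives $\al_{2s-1,2t}\al_{2s,2t}=\al_{2t-1,2t}=\al_{n,2t}^2$.

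The main obstacle is this bookkeeping: reconciling the even-index conditions with the form stated in \eqref{e.normalo}. The term $\al_{2t-1,2s}\al_{2t,2s}$ in the statement looks like the inverted even-index condition with the roles of $s$ and $t$ swapped. The condition involving $\al_{2t,n}^2$ should follow from the others, for instance via $\al_{n,2s-1}\al_{n,2s}=1$, which gives $\al_{2s,n}^2=\al_{2s-1,n}^{-2}\cdot$ (appropriate factors). I would verify carefully that the two systems of equations are equivalent, handling the degenerate case $m=1$, where the conditions over $s\ne t$ are vacuous, separately.
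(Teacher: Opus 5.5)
Your proposal is correct and follows essentially the same route as the paper: use the PBW basis, via your multidegree comparison, to force $x_if=\la_i f x_i$, then read off $\la_i$ by comparing the coefficients of $x_{2s-1}x_{2s}x_i$ and $x_n^2x_i$. The extra condition $\al_{2s,2s-1}=\al_{2s,n}^2$ that you flag is redundant, and exactly so with no extra factors: $\al_{n,2s-1}\al_{n,2s}=1$ gives $\al_{n,2s}=\al_{2s-1,n}$, hence $\al_{n,2s}^2=\al_{2s-1,n}^2=\al_{2s-1,2s}$, a step the paper leaves inside ``it is easily checked'' (your $m=1$ caveat is also right: the $t$-free conditions must be read as holding for every $s$, not only for $s\neq t$).
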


\begin{proof}
$(\Rightarrow)$ Suppose that $f$ is normal. Then, for each $1\leq i\leq n$, there exist $\la_{i1},\dots,\la_{in}\in k$ such that $x_if=f(\sum^{n}_{j=1}\la_{ij}x_j)$.
Since $\{x_1^{a_1}x_2^{a_2}\cdots x_{n}^{a_n} \mid a_{1}, a_{2}, \dots, a_n \geq 0\}$ forms a $k$-basis of $S_\al$, it follows that $\la_{ij}=0$ for $j\neq i$, so
we have $x_if=\la_{i}fx_i$, where we set $\la_{i}:=\la_{ii}$. 
Moreover, it is easily checked that
\begin{align}\label{e.normalcalo}
\begin{split}
&\la_{2s-1}=\al_{2s-1,2t-1}\al_{2s-1,2t}=\al_{2s-1,2s}=\al_{2s-1,n}^2,\quad \la_{2s}=\alpha_{2s,2t-1}\alpha_{2s,2t}= \alpha_{2s,2s-1}= \alpha_{2s,n}^2,\\
&\la_{n}= \al_{n,2s-1}\al_{n,2s}= 1
\end{split}
\end{align}
for all $1\leq s,t\leq m$ with $s\neq t$. Thus we have \eqref{e.normalo}.

$(\Leftarrow)$ Suppose that $\eqref{e.normalo}$. Define $\la_{2s-1}, \la_{2s}$ (for $1\leq s\leq m$) and $\la_{n}$ as in \eqref{e.normalcalo}.
Then one can check that $x_if=\la_{i}fx_i$ for all $1 \leq i\leq n$, so $f$ is a normal element.
\end{proof}

\begin{lem} \label{l.o1}
Assume that $f \in S_{\al}$ is normal. Let $L=\{(2s-1,2s) \mid 1\leq s \leq m \}$.
\begin{enumerate}
\item $S_\al^!$ is isomorphic to $k\langle x_1, \dots, x_{n} \rangle$ with relations
\begin{align*}
x_ix_j+\al_{ji}x_jx_i \;\; (1\leq i<j \leq n), \quad x_i^2 \;\; (1\leq i \leq n).
\end{align*}
\item $A_\al^!$ is isomorphic to $k\langle x_1, \dots, x_{n} \rangle$ with relations
\begin{align*}
&x_ix_j+\al_{ji}x_jx_i \;\;(1\leq i<j \leq n,\ (i,j) \not\in L),\\ 
&x_{i}x_{j}+\al_{ji}x_{j}x_{i}-x_{n}^2 \;\; ((i,j) \in L), \quad 
x_i^2 \;\;(1\leq i \leq 2m).
\end{align*}
\item $w:=x_{n}^2\in A_\al ^!$ is a normal element such that $A_\alpha^!/(w) \cong S_\alpha^!$.
\item $C(A_\al)=A_\al^![w^{-1}]_0$ is isomorphic to $k\langle z_1, \dots, z_{2m} \rangle$ with relations
\begin{align}\label{e.C(A)o}
\begin{split}
&z_iz_j+\widehat\al_{ji}z_jz_i \;\;(1\leq i<j \leq 2m,\ (i,j) \not\in L), \\
&z_iz_j+z_jz_i-1 \;\; ((i,j) \in L), \quad  z_i^2 \;\;(1\leq i \leq 2m),
\end{split}
\end{align}
where $\widehat\al_{ji}:=\al_{nj}\al_{ji}\al_{in}$.
\end{enumerate}
\end{lem}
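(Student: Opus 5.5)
I would prove the four parts in order, since each builds on the previous one.

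\emph{Parts (1) and (2).} For (1), identify $V^*$ with the span of the dual basis, again written $x_1,\dots,x_n$. The relation space of $S_\al$ is $R$, spanned by $x_ix_j-\al_{ij}x_jx_i$ for $i<j$. It has dimension $\binom n2$, so $R^\perp$ has dimension $n^2-\binom n2=\binom{n+1}2$. A pairing check shows that $x_i^2$ and $x_ix_j+\al_{ji}x_jx_i$ lie in $R^\perp$. Indeed, $(x_ix_j+\al_{ji}x_jx_i)(x_ix_j-\al_{ij}x_jx_i)=1-\al_{ji}\al_{ij}=0$. These elements are linearly independent and number $\binom{n+1}2$, so they span $R^\perp$. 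For (2), the relation space of $A_\al$ is $R+kf$. Here $f$ is lifted to $T(V)_2$ as $\sum_{s}x_{2s-1}x_{2s}+x_n^2$. Then $(R+kf)^\perp$ is the codimension-one subspace of $R^\perp$ on which $f$ pairs to zero. The element $x_ix_j+\al_{ji}x_jx_i$ pairs with $f$ to $1$ when $(i,j)\in L$ and to $0$ otherwise. The element $x_i^2$ pairs to $0$ for $i\le 2m$, and $x_n^2$ pairs to $1$. Correcting the pairs in $L$ by subtracting $x_n^2$ gives exactly the listed basis of $(R+kf)^\perp$.

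\emph{Part (3).} I would first show that $x_n^2$ is normal in $A_\al^!$. For $i\ne n$ we have $x_ix_n=-\al_{ni}x_nx_i$, so $x_ix_n^2=\al_{ni}^2x_n^2x_i$, and $x_n$ commutes with $x_n^2$. Hence $x_n^2$ is normal, with $\nu_w(x_i)=\al_{ni}^{2}x_i$ up to the correct orientation of the scalar. Next, $A^!_\al/(x_n^2)$ has exactly the relations of (1), so it is isomorphic to $S^!_\al$. The regularity of $w$ and its agreement with the element of Proposition \ref{p.w}(2) could be argued from Hilbert series. Since $A_\al$ is Koszul, $H_{A^!}(t)=H_{A}(-t)^{-1}=(1+t)^n/(1-t^2)$, while $H_{S^!}(t)=(1+t)^n$. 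Thus $H_{A^!}=H_{S^!}/(1-t^2)$, which forces $w$ to be regular. I expect the paper simply accepts this, since (3) is what the definition of $C(A)$ requires.

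\emph{Part (4).} Set $z_i:=x_ix_nw^{-1}$ for $1\le i\le 2m$. These elements have degree $0$, and the degree-zero part $A^![w^{-1}]_0$ is spanned by words of even length times powers of $w^{-1}$. An even-length word is a product of pairs $x_ix_j$. Using $x_j=(x_jx_n)x_n^{-1}$ with $x_n^{-1}=x_nw^{-1}$, each pair $x_ix_j$ can be rewritten as a product of $z$'s, up to scalars and powers of $w$. So the $z_i$ generate $C(A_\al)$. Next I would compute their relations. For $i\neq j$,
\[
z_iz_j=x_ix_nw^{-1}x_jx_nw^{-1}=c\,x_ix_nx_jx_nw^{-2}=-c\,\al_{nj}\,x_ix_jw^{-1},
\]
where $c$ is the scalar coming from $\nu_w$. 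Substituting this into the relations from (2) gives $z_iz_j+\widehat\al_{ji}z_jz_i=0$ for $(i,j)\notin L$, and $z_iz_j+(\cdots)z_jz_i=1$ for $(i,j)\in L$; also $z_i^2=0$. The obstacle is bookkeeping the scalars. For pairs in $L$ the coefficient must reduce to $1$, and I would use the normality conditions \eqref{e.normalo} to show this, for instance $\al_{n,2s-1}\al_{n,2s}=1$ and $\al_{2s-1,2s}=\al_{2s-1,n}^2$. Finally, the algebra defined by \eqref{e.C(A)o} is spanned by ordered monomials $z_{i_1}\cdots z_{i_r}$ with $i_1<\dots<i_r$, so it has dimension at most $2^{2m}=2^{n-1}$. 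By Proposition \ref{p.dim}, $\dim C(A_\al)=2^{n-1}$. Hence the surjection from the algebra defined by \eqref{e.C(A)o} onto $C(A_\al)$ is an isomorphism.
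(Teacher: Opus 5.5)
Your overall route is the paper's: compute $R^\perp$ and $(R+kf)^\perp$ for (1) and (2), check $x_iw=\al_{ni}^2wx_i$ for (3), and for (4) take degree-zero elements $x_ix_nw^{-1}$ as generators, verify the relations, and compare dimensions using Proposition \ref{p.dim}. Two of your additions are more careful than what the paper writes: the spanning set of ordered square-free monomials, which bounds the presented algebra by $2^{2m}$, and the Hilbert-series argument that $w$ is regular. However, part (4) as you set it up does not produce the stated presentation.

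\textbf{The gap is the normalization of the generators.} Take $z_i:=x_ix_nw^{-1}$ for every $i$. Since $x_n$ commutes with $w=x_n^2$, we have $x_nw^{-1}=x_n^{-1}$. Also $x_n^{-1}x_j=-\al_{nj}x_jx_n^{-1}$. Hence $z_iz_j=-\al_{nj}x_ix_jw^{-1}$ exactly, with no extra factor. Your displayed formula has a spurious $c$: the step $x_ix_nx_jx_nw^{-2}=-\al_{nj}x_ix_jw^{-1}$ ignores that $wx_j=\al_{nj}^{-2}x_jw$. If you kept $c=\al_{nj}^2$, you would get the wrong coefficients $\widehat\al_{ji}\al_{nj}^2\al_{in}^2$ for $(i,j)\notin L$. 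For $(2s-1,2s)\in L$, the correct formula and the relation in (2) give
\[
z_{2s-1}z_{2s}+\widehat\al_{2s,2s-1}z_{2s}z_{2s-1}=-\al_{n,2s}\bigl(x_{2s-1}x_{2s}+\al_{2s,2s-1}x_{2s}x_{2s-1}\bigr)w^{-1}=-\al_{n,2s}.
\]
The conditions \eqref{e.normalo} make $\widehat\al_{2s,2s-1}=1$, but they do not touch the constant term. So you get $z_{2s-1}z_{2s}+z_{2s}z_{2s-1}=-\al_{n,2s}$, not $1$. Already in the commutative case this reads $z_1z_2+z_2z_1=-1$. Your claim that the normality conditions reduce the $L$-relations to ``$=1$'' is therefore false.

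\textbf{The fix is to rescale one generator in each pair.} The paper sends $z_{2s}\mapsto-\al_{2s,n}x_{2s}x_nw^{-1}$. This multiplies the constant by $-\al_{2s,n}$, giving exactly $1$. It leaves the relations $z_iz_j+\widehat\al_{ji}z_jz_i$ for $(i,j)\notin L$ and $z_i^2$ unchanged, because each is homogeneous in every generator. With that normalization the rest of your argument goes through: surjectivity, the upper bound $2^{2m}$, and $\dim_k C(A_\al)=2^{2m}$ from Proposition \ref{p.dim}.
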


\begin{proof}
(1) and (2) follow from a direct computation.

(3) Since $x_iw=\al_{ni}^2wx_i$ for $1\leq i\leq 2m$ and $x_{n}w=wx_{n}$, we see that $w$ is normal.
The last isomorphism immediately follows from (1) and (2).

(4) We define an algebra homomorphism $\vph: k\langle z_1, \dots, z_{2m} \rangle \to C(A_\al)$ by 
\[
z_{2s-1} \mapsto x_{2s-1}x_{n}w^{-1},\;\;
z_{2s}\mapsto -\al_{2s, n}x_{2s}x_{n}w^{-1} \quad\text{for $1\leq s \leq m$.}
\]
Since $w^{-1}x_{n}=x_{n}w^{-1}$, we have
\begin{align*}
x_ix_{n}w^{-1} x_jx_{n}w^{-1} 
&= -\al_{nj}x_ix_{n}w^{-1} x_{n}x_jw^{-1}
= -\al_{nj}x_ix_{n}^2w^{-1}x_jw^{-1} 
=-\al_{nj}x_ix_jw^{-1} 
\end{align*}
for any $1 \leq i, j \leq 2m$.
Thus we see that $\{x_ix_nw^{-1} \mid 1\leq i \leq2m \}$ generates $C(A_\al)$, and hence $\vph$ is surjective.
Moreover, if $(i,j)=(2s-1,2s) \in L$, then
$\al_{nj}\al_{ji}\al_{in}=\al_{n, 2s-1}^2\al_{2s-1, n}^2=1$
by \eqref{e.normalo}. Using the above arguments, one can verify that \eqref{e.C(A)o} lie in $\Ker \vph$. This yields that $\vph$ induces a surjective algebra homomorphism from $k\langle z_1, \dots, z_{2m} \rangle $ subject to the relations \eqref{e.C(A)o} to $C(A_\al)$.
Using Proposition \ref{p.dim}, we see that both algebras have dimension $2^{2m}$, so the induced homomorphism is an isomorphism.
 \end{proof}

For the remainder of this section, assume that 
$f \in S_\al$ is normal, and use the following notation.
\begin{itemize}
\item $L:=\{(2s-1,2s) \mid 1\leq s \leq m \}$.
\item For $1\leq i,j \leq 2m$, define $\widehat\al_{ij}:=\al_{ni}\al_{ij}\al_{jn}$.
\item For $1\leq r<s \leq m$, the elements $F_{sr}^{(1)},F_{sr}^{(2)} \in C(A_\al)$ are defined by
\begin{align*}
&F^{(1)}_{sr} =
1-(1-\widehat\alpha_{2s,2r})z_{2r-1}z_{2r},\\
&F^{(2)}_{sr} =
1-(1-\widehat\alpha_{2r,2s})z_{2r-1}z_{2r},
\end{align*}
where we identify the isomorphism obtained in Lemma \ref{l.o1}(4).
\end{itemize}

\begin{lem}\label{l.o2}
Assume that $f \in S_\al$ is normal.
\begin{enumerate}
\item $\widehat\alpha_{2r-1,i}\widehat\alpha_{2r,i}=\widehat\alpha_{i,2r-1}\widehat\alpha_{i,2r}= 1$ for $1\leq r \leq m$ and $1\leq i \leq 2m$.
\item In $C(A_\al)$, 
we have 
\[
F^{(1)}_{sr}z_i=
\begin{cases}
\widehat\alpha_{i,2s}z_iF^{(1)}_{sr}  &\text{if $i=2r-1, 2r$},\\
z_iF^{(1)}_{sr}  & \text{otherwise},
\end{cases}
\qquad 
F^{(2)}_{sr}z_i=
\begin{cases}
\widehat\alpha_{2s,i}z_iF^{(2)}_{sr}  &\text{if $i=2r-1, 2r$},\\
z_iF^{(2)}_{sr}  & \text{otherwise}.
\end{cases}
\]
\item Any two elements of $\{F^{(a)}_{sr} \mid a\in \{1,2\},\ 1\leq r<s\leq m \}$ commute with each other in $C(A_\al)$.
\item In particular, $F_{sr}^{(1)}F_{sr}^{(2)}=F_{sr}^{(2)}F_{sr}^{(1)}=1$ in $C(A_\al)$.
\end{enumerate}
\end{lem}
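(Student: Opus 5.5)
The plan is to derive everything from the defining relations \eqref{e.C(A)o} of $C(A_\al)$ in Lemma \ref{l.o1}(4), together with the normality conditions \eqref{e.normalo}, using the scalars $\la_i$ of \eqref{e.normalcalo}. Two preliminary facts will be used throughout. First, $\widehat\al_{ij}\widehat\al_{ji}=1$, since $\al_{ij}\al_{ji}=1$ and $\al_{in}\al_{ni}=1$. Second, this identity turns the relations for $i<j$ into the uniform rule
\[
z_iz_j=-\widehat\al_{ji}z_jz_i \quad\text{for all } i\neq j \text{ with } \{i,j\}\neq\{2r-1,2r\}.
\]

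For (1), I would expand
\[
\widehat\al_{2r-1,i}\widehat\al_{2r,i}=(\al_{n,2r-1}\al_{n,2r})(\al_{2r-1,i}\al_{2r,i})\al_{in}^2.
\]
By \eqref{e.normalo}, the first factor is $\la_n=1$. For $i\notin\{2r-1,2r\}$, \eqref{e.normalcalo} gives $\al_{i,2r-1}\al_{i,2r}=\la_i=\al_{i,n}^2$. Hence $\al_{2r-1,i}\al_{2r,i}=\la_i^{-1}$, and the product is $1$. For $i\in\{2r-1,2r\}$, I would check directly using $\al_{2r-1,2r}=\la_{2r-1}=\al_{2r-1,n}^2$ and $\al_{n,2r-1}\al_{n,2r}=1$. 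The second identity follows from the first by inverting via $\widehat\al_{ij}\widehat\al_{ji}=1$.

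For (2), set $e_r:=z_{2r-1}z_{2r}$, so that $F^{(a)}_{sr}=1-c\,e_r$ with $c=1-\widehat\al_{2s,2r}$ or $c=1-\widehat\al_{2r,2s}$. There are two cases.
\begin{itemize}
\item For $i\notin\{2r-1,2r\}$, the uniform rule gives $e_rz_i=\widehat\al_{i,2r-1}\widehat\al_{i,2r}\,z_ie_r=z_ie_r$ by (1). So $F^{(a)}_{sr}$ commutes with $z_i$.
\item For $i\in\{2r-1,2r\}$, use $z_{2r-1}^2=z_{2r}^2=0$ and $z_{2r}z_{2r-1}=1-z_{2r-1}z_{2r}$. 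These give $e_rz_{2r-1}=z_{2r-1}$, $z_{2r-1}e_r=0$, $z_{2r}e_r=z_{2r}$ and $e_rz_{2r}=0$. Hence
\[
(1-ce_r)z_{2r-1}=(1-c)\,z_{2r-1}(1-ce_r), \qquad (1-c)\,(1-ce_r)z_{2r}=z_{2r}(1-ce_r).
\]
\end{itemize}
With $1-c=\widehat\al_{2s,2r}$, the required scalars are $\widehat\al_{2r-1,2s}$ and $\widehat\al_{2r,2s}$. These equal $\widehat\al_{2s,2r}$ and $\widehat\al_{2s,2r}^{-1}$, respectively, by (1) and by $\widehat\al_{ij}\widehat\al_{ji}=1$. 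The case $a=2$ is symmetric, with the roles of $2r$ and $2s$ swapped in the scalar.

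For (3), elements with different indices $r$ are polynomials in $e_r$ and $e_{r'}$ with $r\neq r'$. These commute by the first case of (2), because $e_{r'}$ is a product of $z_i$ with $i\notin\{2r-1,2r\}$. Elements with the same $r$ are both polynomials in $e_r$.

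For (4), note that $e_r^2=z_{2r-1}(z_{2r}z_{2r-1})z_{2r}=z_{2r-1}(1-e_r)z_{2r}=e_r$, so $e_r$ is idempotent. Then, with $u=\widehat\al_{2s,2r}$,
\[
(1-(1-u)e_r)(1-(1-u^{-1})e_r)=1-\bigl(1-u\,u^{-1}\bigr)e_r=1.
\]
The only step needing real care is (1): there, the case distinction on $i$ relative to the pairs in $L$ must be matched correctly against the clauses of \eqref{e.normalo}. Everything else is formal once (1) is established.
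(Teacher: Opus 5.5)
Your proposal is correct and takes essentially the same route as the paper: you verify everything directly from the relations \eqref{e.C(A)o} and the normality conditions \eqref{e.normalo}, use (1) to identify the scalars in (2), and get (4) from the idempotence of $z_{2r-1}z_{2r}$. Introducing $e_r=z_{2r-1}z_{2r}$ and the identity $\widehat\al_{ij}\widehat\al_{ji}=1$ only organizes the paper's case-by-case computations more compactly, and you also spell out the check of (1) that the paper compresses into a single appeal to \eqref{e.normalo}.
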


\begin{proof}
(1) This follows by $\widehat\alpha_{2r-1,i}\widehat\alpha_{2r,i}
=\al_{n,2r-1}\al_{n,2r}\al_{2r-1,i}\al_{2r,i}\al_{i, n}^2
\overset{\eqref{e.normalo}}{=}1$.

(2) The first result follows from
\begin{align*}
F^{(1)}_{sr}z_i &= (1-(1-\widehat\alpha_{2s,2r})z_{2r-1}z_{2r})z_i \\
&= 
\begin{cases}
(1-(1-\widehat\alpha_{2s,i+1})z_{i}z_{i+1})z_{i}
= \widehat\alpha_{2s,i+1}z_i \overset{(1)}{=} \widehat\alpha_{i,2s}z_i
&\text{if}\ i=2r-1, \\
(1-(1-\widehat\alpha_{2s,i})z_{i-1}z_i)z_i=z_i &\text{if}\ i=2r, \\
z_i -\widehat\alpha_{i,2r-1}\widehat\alpha_{i,2r}(1-\widehat\alpha_{2s,2r})
z_iz_{2r-1}z_{2r}
\overset{(1)}{=}
z_iF_{sr}
& \text{otherwise},
\end{cases}\\
z_i F^{(1)}_{sr} &=
\begin{cases}
z_i(1-(1-\widehat\alpha_{2s,i+1})z_{i}z_{i+1})
= z_i &\text{if}\ i=2r-1,\\
z_i(1-(1-\widehat\alpha_{2s,i})z_{i-1}z_{i}) = \widehat\alpha_{2s,i}z_i = \widehat\alpha_{i,2s}^{-1}z_i
&\text{if}\ i=2r.
\end{cases}
\end{align*}

The second result follows from
\begin{align*}
F^{(2)}_{sr}z_i &=
(1-(1-\widehat\alpha_{2r,2s})z_{2r-1}z_{2r})z_i\\
&=
\begin{cases}
(1 -(1-\widehat\alpha_{i+1,2s})z_{i}z_{i+1})z_{i}
=\widehat\alpha_{i+1,2s}z_i
\overset{(1)}{=}\widehat\alpha_{2s,i}z_i
&\text{if}\ i=2r-1, \\
(1-(1-\widehat\alpha_{i,2s})z_{i-1}z_i)z_i
=z_i &\text{if}\ i=2r, \\
z_i-\widehat\alpha_{i,2r-1}\widehat\alpha_{i,2r}(1-\widehat\alpha_{2r,2s})z_iz_{2r-1}z_{2r}
\overset{(1)}{=}
z_iF_{sr}
& \text{otherwise},
\end{cases}\\
z_i F^{(2)}_{sr} &=
\begin{cases}
z_i(1-(1-\widehat\alpha_{i+1,2s})z_{i}z_{i+1})
= z_i &\text{if}\ i=2r-1,\\
z_i(1-(1-\widehat\alpha_{i,2s})z_{i-1}z_{i})=
\widehat\alpha_{i,2s}z_i=\widehat\alpha_{2s,i}^{-1}z_i&\text{if}\ i=2r.
\end{cases}
\end{align*}

(3) Let $1\leq p<q \leq m$. By (2), we have
\[
F^{(1)}_{sr}z_{2p-1}z_{2p} =
\begin{cases}
z_{2p-1}z_{2p}-(1-\widehat\alpha_{2s,2p})z_{2p-1}z_{2p}z_{2p-1}z_{2p} = z_{2p-1}z_{2p}F^{(1)}_{sr}
& \text{if $p=r$},\\
z_{2p-1}z_{2p}F^{(1)}_{sr}  & \text{otherwise}.
\end{cases}
\]
Similarly, $F^{(2)}_{sr}z_{2p-1}z_{2p}=z_{2p-1}z_{2p}F^{(2)}_{sr}$. Therefore, we get $F^{(a)}_{sr}F^{(b)}_{qp}=F^{(b)}_{qp}F^{(a)}_{sr}$.

(4) We have
\begin{align*}
&F^{(1)}_{sr}F^{(2)}_{sr}=
(1-(1-\widehat\alpha_{2s, 2r})z_{2r-1}z_{2r} )
(1-(1-\widehat\alpha_{2r, 2s})z_{2r-1}z_{2r} )\\
&=1-(2-\widehat\alpha_{2s, 2r}-\widehat\alpha_{2r, 2s})z_{2r-1}z_{2r}+(2-\widehat\alpha_{2s, 2r}-\widehat\alpha_{2r, 2s})z_{2r-1}z_{2r}z_{2r-1}z_{2r}\\
&=1-(2-\widehat\alpha_{2s, 2r}-\widehat\alpha_{2r, 2s})z_{2r-1}z_{2r}+(2-\widehat\alpha_{2s, 2r}-\widehat\alpha_{2r, 2s})z_{2r-1}z_{2r}=1.\qedhere
\end{align*}
\end{proof} 

\begin{thm}\label{t.oiso}
$C(A_\al)$ is isomorphic to $C(A_{\textnormal{comm}})$.
\end{thm}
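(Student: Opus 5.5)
The plan is to build, inside $C(A_\al)$, new generators $y_1,\dots,y_{2m}$ that satisfy exactly the relations of $C(A_{\textnormal{comm}})$. By Lemma \ref{l.o1}(4) with all $\al_{ij}=1$ (so $\widehat\al_{ij}=1$), $C(A_{\textnormal{comm}})$ is presented by generators $z_1,\dots,z_{2m}$ and the relations
\[
z_iz_j+z_jz_i\ ((i,j)\notin L), \qquad z_{2s-1}z_{2s}+z_{2s}z_{2s-1}-1, \qquad z_i^2 .
\]
Once the $y_i$ are in hand, we get an algebra homomorphism $C(A_{\textnormal{comm}})\to C(A_\al)$ sending $z_i\mapsto y_i$. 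We will show it is surjective and then conclude by comparing dimensions: by Proposition \ref{p.dim}, both algebras have dimension $2^{2m}$.

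\textbf{The twist.} For $1\le s\le m$ set
\[
y_{2s-1}:=z_{2s-1}\prod_{r<s}F^{(1)}_{sr},\qquad y_{2s}:=z_{2s}\prod_{r<s}F^{(2)}_{sr}.
\]
By Lemma \ref{l.o2}(3) the factors commute, so the order of the products is irrelevant. By Lemma \ref{l.o2}(4) each $F$ is a unit, so $z_i$ is recovered as $y_i$ times a product of $F$'s. Each $F$ is itself a polynomial in the $y$'s: $F^{(a)}_{sr}$ involves only $z_{2r-1}z_{2r}$ with $r<s$, so induction on $s$ applies. This gives surjectivity.

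\textbf{Relations within one pair.} By Lemma \ref{l.o2}(2), every $F_{sr}$ with $r<s$ commutes with $z_{2s-1}$ and $z_{2s}$, since these indices lie outside pair $r$. Hence:
\begin{itemize}
\item $y_i^2=z_i^2\prod F^2=0$.
\item $y_{2s-1}y_{2s}=z_{2s-1}z_{2s}\prod_{r<s}F^{(1)}_{sr}F^{(2)}_{sr}=z_{2s-1}z_{2s}$, using Lemma \ref{l.o2}(4).
\item Likewise $y_{2s}y_{2s-1}=z_{2s}z_{2s-1}$, so $y_{2s-1}y_{2s}+y_{2s}y_{2s-1}=1$.
\end{itemize}
This is exactly why $F^{(1)}$ is paired with the odd index and $F^{(2)}$ with the even one.

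\textbf{Relations across two pairs (main obstacle).} Take $i$ in pair $s$ and $j$ in pair $p$ with $p<s$.
\begin{itemize}
\item The factors of $y_j$ are $F_{pr'}$ with $r'<p<s$, so they commute with $z_i$. They also commute with the factors of $y_i$.
\item The factors of $y_i$ commute with $z_j$, except $F_{sp}$, which by Lemma \ref{l.o2}(2) produces the scalar $\widehat\al_{j,2s}$ (for $i=2s-1$) or $\widehat\al_{2s,j}$ (for $i=2s$).
\end{itemize}
So $y_iy_j=c\,z_iz_j\Phi$ and $y_jy_i=z_jz_i\Phi$, where $\Phi$ is the common product of $F$'s and $c$ is the scalar above. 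Using the relation $z_iz_j=-\widehat\al_{ij}z_jz_i$ from \eqref{e.C(A)o}, we get $y_iy_j+y_jy_i=(1-c\,\widehat\al_{ij})z_jz_i\Phi$.

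It remains to check $c\,\widehat\al_{ij}=1$. For this we use Lemma \ref{l.o2}(1) in the forms $\widehat\al_{2s-1,j}\widehat\al_{2s,j}=1$ and $\widehat\al_{j,2s-1}\widehat\al_{j,2s}=1$, together with $\widehat\al_{ij}\widehat\al_{ji}=1$, which holds since $\al_{ni}\al_{in}=1$.

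I expect this scalar bookkeeping to be the main obstacle: keeping the index order of $\widehat\al$ straight, and tracking which of $F^{(1)},F^{(2)}$ is attached where. If the scalars turn out inverted, I would first try swapping the roles of $F^{(1)}$ and $F^{(2)}$, or multiplying on the left instead of the right. Pairs $(i,j)$ inside $L$ and same-index relations were handled in the within-pair step, so this completes the verification, and the dimension count finishes the proof.
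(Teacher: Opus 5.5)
Your map $z_{2s-1}\mapsto z_{2s-1}\prod_{r<s}F^{(1)}_{sr}$, $z_{2s}\mapsto z_{2s}\prod_{r<s}F^{(2)}_{sr}$ is exactly the paper's $\psi$. The rest of your structure also matches the paper: the relations are checked via Lemma \ref{l.o2}(1)--(4), surjectivity comes from recovering each $z_i$ from $y_i$, and the proof closes with the dimension count $2^{2m}$. (Incidentally, your identity $y_{2r-1}y_{2r}=z_{2r-1}z_{2r}$ already puts every $F^{(a)}_{sr}$ in the image, so no induction is needed for surjectivity.)

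However, the cross-pair check fails as you wrote it, because you misquoted \eqref{e.C(A)o}. For $j<i$ with $(j,i)\notin L$, the relation reads $z_jz_i+\widehat\al_{ij}z_iz_j=0$, i.e. $z_iz_j=-\widehat\al_{ji}z_jz_i$, not $z_iz_j=-\widehat\al_{ij}z_jz_i$. In fact $c=\widehat\al_{ij}$ in both cases:
\begin{itemize}
\item for $i=2s$ this holds directly;
\item for $i=2s-1$, $c=\widehat\al_{j,2s}=\widehat\al_{j,2s-1}^{-1}=\widehat\al_{2s-1,j}$ by Lemma \ref{l.o2}(1) and $\widehat\al_{ab}\widehat\al_{ba}=1$.
\end{itemize}
So your target $c\,\widehat\al_{ij}=1$ actually says $\widehat\al_{ij}^2=1$, which is false in general. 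For instance, with $n=5$ one has $\widehat\al_{13}=\al_{15}^{-1}\al_{13}\al_{35}$, which is a free parameter.

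The correct computation is $y_iy_j+y_jy_i=(c\,z_iz_j+z_jz_i)\Phi=(c-\widehat\al_{ij})\,z_iz_j\Phi$. So what you must verify is $c=\widehat\al_{ij}$ (equivalently $c\,\widehat\al_{ji}=1$), and this holds by the two observations above; it is precisely how the paper uses Lemma \ref{l.o2}(1). Your fallback would have led you astray: swapping $F^{(1)}$ and $F^{(2)}$ turns $c$ into $\widehat\al_{ji}$. That choice satisfies your misstated condition but violates the actual relations.

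Relatedly, your remark about the within-pair step is inaccurate. The within-pair relations only force the two members of a pair to receive mutually inverse twists, and $F^{(1)}_{sr}F^{(2)}_{sr}=F^{(2)}_{sr}F^{(1)}_{sr}=1$ works either way. It is the cross-pair relations that dictate that $F^{(1)}$ goes with the odd index.
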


\begin{proof}
By Lemma \ref{l.o1}(4), $C(A_\al)$ is given by $k\langle z_1, \dots, z_{2m} \rangle$ with relations
\begin{align*}
z_iz_j+\widehat\al_{ji}z_jz_i \;\;(1\leq i<j \leq 2m, (i,j) \not\in L),\quad
z_iz_j+z_jz_i-1 \;\; ((i,j) \in L), \quad  z_i^2 \;\;(1\leq i \leq 2m),
\end{align*}
and $C(A_{\textnormal{comm}})$ is given by $k\langle z_1', \dots, z_{2m}' \rangle$ with relations
\begin{align*}
z_i'z_j'+z_j'z_i' \;\;(1\leq i<j \leq 2m, (i,j) \not\in L),\quad
z_i'z_j'+z_j'z_i'-1 \;\; ((i,j) \in L), \quad  (z_i')^2 \;\;(1\leq i \leq 2m).
\end{align*}
We define an algebra homomorphism $\psi: k\langle z_1', \dots, z_{2m}' \rangle \to C(A_\al)$ by 
\begin{align*}
\psi(z_{2s-1}') =z_{2s-1}\prod_{r=1}^{s-1}F^{(1)}_{sr}, \quad
\psi(z_{2s}') = z_{2s}\prod_{r=1}^{s-1}F^{(2)}_{sr}
\quad \ \text{for}\ 1\leq s\leq m.
\end{align*}

First, we show that this induces an algebra homomorphism $C(A_{\textnormal{comm}}) \to C(A_\al)$.
If $i=2s-1$ and $j=2t-1$ are odd with $1\leq s<t\leq m$, then
\begin{align*}
&\psi(z_i'z_j'+z_j'z_i')=
(z_{2s-1}\prod_{r=1}^{s-1}F^{(1)}_{sr})(z_{2t-1}\prod_{r=1}^{t-1}F^{(1)}_{tr})+(z_{2t-1}\prod_{r=1}^{t-1}F^{(1)}_{tr})(z_{2s-1}\prod_{r=1}^{s-1}F^{(1)}_{sr})
\\
&\overset{\text{Lem.\,\ref{l.o2}(2)}}{=}
z_{2s-1}z_{2t-1}(\prod_{r=1}^{s-1}F^{(1)}_{sr})(\prod_{r=1}^{t-1}F^{(1)}_{tr})+\widehat\alpha_{2s-1,2t}z_{2t-1}z_{2s-1}(\prod_{r=1}^{t-1}F^{(1)}_{tr})(\prod_{r=1}^{s-1}F^{(1)}_{sr})\\
&\overset{\text{Lem.\,\ref{l.o2}(3)}}{=}
(z_{2s-1}z_{2t-1}+\widehat\alpha_{2s-1,2t}z_{2t-1}z_{2s-1})(\prod_{r=1}^{s-1}F^{(1)}_{sr})(\prod_{r=1}^{t-1
}F^{(1)}_{tr})\\
&\overset{\text{Lem.\,\ref{l.o2}(1)}}{=}
(z_{i}z_{j}+\widehat\alpha_{ji}z_{j}z_{i})(\prod_{r=1}^{s-1}F^{(1)}_{sr})(\prod_{r=1}^{t-1
}F^{(1)}_{tr})
=0.
\end{align*}
If $i=2s-1$ is odd and $j=2t$ is even with $1\leq s<t\leq m$, then
\begin{align*}
&\psi(z_i'z_j'+z_j'z_i')
=(z_{2s-1}\prod_{r=1}^{s-1}F^{(1)}_{sr})(z_{2t}\prod_{r=1}^{t-1}F^{(2)}_{tr})+(z_{2t}\prod_{r=1}^{t-1}F^{(2)}_{tr})(z_{2s-1}\prod_{r=1}^{s-1}F^{(1)}_{sr})\\
&\overset{\text{Lem.\,\ref{l.o2}(2)}}{=}
z_{2s-1}z_{2t}(\prod_{r=1}^{s-1}F^{(1)}_{sr})(\prod_{r=1}^{t-1}F^{(2)}_{tr})+\widehat\alpha_{2t,2s-1}z_{2t}z_{2s-1}(\prod_{r=1}^{t-1}F^{(2)}_{tr})(\prod_{r=1}^{s-1}F^{(1)}_{sr})\\
&\overset{\text{Lem.\,\ref{l.o2}(3)}}{=}
(z_{i}z_{j}+\widehat\alpha_{ji}z_{j}z_{i})(\prod_{r=1}^{s-1}F^{(1)}_{sr})(\prod_{r=1}^{t-1}F^{(2)}_{tr})=0.
\end{align*}
Similarly, one can check that if $1\leq i<j\leq 2m$, 
and $i$ is even, then
$\psi(z_i'z_j'+z_j'z_i')=0$.
Moreover, for $1\leq s \leq m$, we have
\begin{align*}
\psi(z'_{2s-1}z'_{2s}+z'_{2s}z'_{2s-1}-1) 
&= 
(z_{2s-1}\prod_{r=1}^{s-1}F^{(1)}_{sr})
(z_{2s}\prod_{r=1}^{s-1}F^{(2)}_{sr})+    
(z_{2s}\prod_{r=1}^{s-1}F^{(2)}_{sr})
(z_{2s-1}\prod_{r=1}^{s-1}F^{(1)}_{sr})-1\\
&\overset{\text{Lem.\,\ref{l.o2}(2),(3)}}{=}
z_{2s-1}z_{2s}(\prod_{r=1}^{s-1}F^{(1)}_{sr}F^{(2)}_{sr})+
z_{2s}z_{2s-1}(\prod_{r=1}^{s-1}F^{(2)}_{sr}F^{(1)}_{sr})
-1\\
&\overset{\text{Lem.\,\ref{l.o2}(4)}}{=}
z_{2s-1}z_{2s}+z_{2s}z_{2s-1}-1=0.
\end{align*}
For $1\leq s \leq m$, we have
\[
\psi((z_{2s-1}')^2)= (z_{2s-1}\prod_{r=1}^{s-1}F^{(1)}_{sr})^2\overset{\text{Lem.\,\ref{l.o2}(2)}}{=}z_{2s-1}^2(\prod_{r=1}^{s-1}F^{(1)}_{sr})^2=0,
\]
and similarly $\psi((z_{2s}')^2)=0$.
Thus we get the induced algebra homomorphism $\overline{\psi}: C(A_{\textnormal{comm}}) \to C(A_\al)$.

Next, we show by induction that each $z_i$ lies in $\Im \overline{\psi}$, which implies that $\overline{\psi}$ is surjective.
Clearly, $z_1 = \overline{\psi}(z_1'), z_2 = \overline{\psi}(z_2') \in \Im \overline{\psi}$. Suppose that $z_i \in \Im \overline{\psi}$ for all $1\leq i\leq 2\ell-2$. Then $F^{(1)}_{\ell r}, F^{(2)}_{\ell r} \in \Im \overline{\psi}$ for all $1\leq r \leq \ell-1$, so 
\begin{align*}
&z_{2\ell-1} 
\overset{\text{Lem.\,\ref{l.o2}(4)}}{=}z_{2\ell-1}(\prod_{r=1}^{\ell-1}F^{(1)}_{\ell r}F^{(2)}_{\ell r})
\overset{\text{Lem.\,\ref{l.o2}(3)}}{=} \overline{\psi}(z_{2\ell-1}')(\prod_{r=1}^{\ell-1}F^{(2)}_{\ell r}) \in \Im \overline{\psi},\\
&z_{2\ell}
\overset{\text{Lem.\,\ref{l.o2}(4)}}{=} z_{2\ell}(\prod_{r=1}^{\ell-1}F^{(2)}_{\ell r}F^{(1)}_{\ell r})
\overset{\text{Lem.\,\ref{l.o2}(3)}}{=}\overline{\psi}(z_{2\ell}')(\prod_{r=1}^{\ell-1}F^{(1)}_{\ell r}) \in \Im \overline{\psi}.
\end{align*}
Therefore, $\overline{\psi}$ is surjective.

Since $\dim_k C(A_{\textnormal{comm}})= \dim_k C(A_\al) =2^{2m}$, it follows that 
$\overline{\psi}$ is an isomorphism.
\end{proof}

We now prove Theorem \ref{t.intromain} in the case where $n$ is odd.

\begin{proof}[Proof of Theorem \ref{t.intromain} for odd $n$]

Since $A_{\textnormal{comm}} \cong k[x_1,\dots,x_n]/(x_1^2+\dots+x_n^2)=:B$,
it follows from Theorem \ref{t.oiso} and \eqref{e.commCA} that $C(A_\al) \cong C(A_{\textnormal{comm}}) \cong C(B) \cong M_{2^{(n-1)/2}}(k)$.
Furthermore, Theorem \ref{t.SV} and Morita theory imply that
$\uCM(A_\al) \simeq \D^b(\mod  M_{2^{(n-1)/2}}(k)^{\op})\simeq \D^b(\mod k)$.
\end{proof}

\section{Proof of Theorem \ref{t.intromain}: the case of an even number of variables}\label{sec.even}

This section is devoted to proving Theorem \ref{t.intromain} in the case where the number of variables is even. 
While the even-variable case requires more technical arguments, the overall strategy is the same as in the odd-variable case. We therefore adopt the same notation for the analogous objects, despite the differences in their definitions.
Throughout this section,

\begin{itemize}
\item $n:=2m+2$ with $m \geq 0$,
\item $S_\al = k\langle x_1, \dots, x_{n} \rangle /(x_ix_j -\al_{ij} x_jx_i)$ is a standard graded skew polynomial algebra in $n=2m+2$ variables, 
\item $f:=x_1x_2+\cdots+x_{2m-1}x_{2m}+x_{n-1}x_{n} \in S_\al $,
\item $A_\al :=S_\al /(f)$,
\item $A_{\textnormal{comm}}:=k[x_1,\dots,x_n]/(x_1x_2+\cdots+x_{2m-1}x_{2m}+x_{n-1}x_{n})$,
that is, this is the special case of $S_\al /(f)$ where $\al_{ij}=1$ for all $1\leq i,j\leq n$.
\end{itemize}

\begin{lem}
The element $f \in S_\al$ is normal if and only if 
\begin{align}\label{e.normale}
\al_{2t-1,2s}\al_{2t,2s}=\al_{2s-1,2t-1}\al_{2s-1,2t}=\al_{2s-1,2s}
\end{align}
for all $1\leq s,t\leq m+1$ with $s\neq t$.
\end{lem}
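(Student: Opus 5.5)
I will follow the proof of Lemma \ref{l.normalo} essentially verbatim, now with $m+1$ pairs $(2s-1,2s)$, $1\le s\le m+1$, and no extra square term.

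For $(\Rightarrow)$, suppose $f$ is normal. Then for each $i$ there are scalars $\la_{ij}$ with $x_if=f(\sum_j \la_{ij}x_j)$. Expand both sides in the PBW basis $\{x_1^{a_1}\cdots x_n^{a_n}\}$ of $S_\al$. The left side $x_if$ is a sum of monomials each divisible by $x_i$, since $x_ix_{2s-1}x_{2s}$ is a scalar multiple of the ordered monomial. Comparing supports, $\la_{ij}=0$ for $j\neq i$, so $x_if=\la_i f x_i$.

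Next I compute $x_i x_{2s-1}x_{2s}$ in terms of $x_{2s-1}x_{2s}x_i$. Fix $i=2t-1$. For $s\neq t$, commuting $x_{2t-1}$ past $x_{2s-1}x_{2s}$ gives the factor $\al_{2t-1,2s-1}\al_{2t-1,2s}$. For $s=t$, we have $x_{2t-1}x_{2t-1}x_{2t}=x_{2t-1}x_{2t}x_{2t-1}\cdot\al_{2t-1,2t}$, using $x_{2t-1}x_{2t}=\al_{2t-1,2t}x_{2t}x_{2t-1}$. The monomials $x_{2s-1}x_{2s}x_i$ for different $s$ are distinct PBW monomials up to scalars, so their coefficients must all agree. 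This gives
\[
\la_{2t-1}=\al_{2t-1,2s-1}\al_{2t-1,2s}=\al_{2t-1,2t}\quad(s\neq t).
\]
Relabelling $s\leftrightarrow t$ yields $\al_{2s-1,2t-1}\al_{2s-1,2t}=\al_{2s-1,2s}$.

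Similarly, take $i=2t$. For $s\ne t$ we get the factor $\al_{2t,2s-1}\al_{2t,2s}$. For $s=t$, we have $x_{2t}x_{2t-1}x_{2t}=\al_{2t,2t-1}x_{2t-1}x_{2t}x_{2t}$. Hence $\la_{2t}=\al_{2t,2s-1}\al_{2t,2s}=\al_{2t,2t-1}$. Using $\al_{ij}\al_{ji}=1$, the relation $\al_{2t,2s-1}\al_{2t,2s}=\al_{2t,2t-1}$ inverts to $\al_{2s-1,2t}\al_{2s,2t}=\al_{2t-1,2t}$. After relabelling $s\leftrightarrow t$, this is exactly $\al_{2t-1,2s}\al_{2t,2s}=\al_{2s-1,2s}$. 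So both equalities in \eqref{e.normale} follow.

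For $(\Leftarrow)$, assume \eqref{e.normale}. Define $\la_{2s-1}:=\al_{2s-1,2s}$ and $\la_{2s}:=\al_{2s,2s-1}$. The same monomial computations then show $x_if=\la_ifx_i$ for every $i$, so $f$ is normal.

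The main obstacle is purely bookkeeping. The point is to check that the family of conditions derived from the even indices $2t$ is equivalent, via $\al_{ij}\al_{ji}=1$ and relabelling, to the first equality in \eqref{e.normale}. I will present that inversion explicitly and leave the remaining verifications as direct computations, as in Lemma \ref{l.normalo}.
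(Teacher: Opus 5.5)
Your proposal is correct and is essentially the paper's proof. You reduce to $x_if=\la_i f x_i$ via the PBW basis, read off $\la_{2t-1}$ and $\la_{2t}$ by commuting $x_i$ past each $x_{2s-1}x_{2s}$, and use $\al_{ij}\al_{ji}=1$ with relabelling to obtain the first equality of \eqref{e.normale}, which is exactly the step the paper leaves as ``easily checked.''

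One small wording point concerns the step $\la_{ij}=0$ for $j\neq i$. Divisibility by $x_i$ alone does not rule out the monomials $x_{2s-1}x_{2s}x_j$, since they are divisible by $x_i$ whenever $i\in\{2s-1,2s\}$. The cleaner justification is that these monomials are pairwise distinct PBW monomials and none of them occurs in $x_if$.
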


\begin{proof}
$(\Rightarrow)$ Suppose that $f$ is normal. 
By the same argument as in the proof of Lemma \ref{l.normalo},
for each $1\leq i\leq n$, there exists $\la_{i}\in k$ such that $x_if=\la_{i}fx_i$.
Moreover, it is easily checked that
\begin{align}\label{e.normalcale}
\begin{split}
\la_{2s-1}=\al_{2s-1,2t-1}\al_{2s-1,2t}=\al_{2s-1,2s},\qquad 
\la_{2s}=\al_{2s,2t-1}\al_{2s,2t}=\al_{2s,2s-1}
\end{split}
\end{align}
for all $1\leq s,t\leq m+1$ with $s\neq t$. Thus we have \eqref{e.normale}.

$(\Leftarrow)$ Suppose that $\eqref{e.normale}$. Define $\la_{2s-1}, \la_{2s}$ (for $1\leq s\leq m+1$) as in \eqref{e.normalcale}.
Then one can check that $x_if=\la_{i}fx_i$ for all $1 \leq i \leq  2m+2$, so $f$ is a normal element.
\end{proof}

\begin{lem} \label{l.e1}
Assume that $f \in S_{\al}$ is normal. Let $L'=\{(2s-1,2s) \mid 1\leq s \leq m+1 \}$.
\begin{enumerate}
\item $S_\al^!$ is isomorphic to $k\langle x_1, \dots, x_{n} \rangle$ with relations
\begin{align*}
x_ix_j+\al_{ji}x_jx_i \;\; (1\leq i<j \leq n), \quad x_i^2 \;\; (1\leq i \leq n).
\end{align*}
\item $A_\al^!$ is isomorphic to $k\langle x_1, \dots, x_{n} \rangle$ with relations
\begin{align*}
&x_ix_j+\al_{ji}x_jx_i \;\;(1\leq i<j \leq n,\ (i,j) \not\in L'),\\ 
&x_{i}x_{j}+\al_{ji}x_{j}x_{i}-(x_{n-1}x_{n}+\al_{n,n-1}x_{n}x_{n-1}) \;\; ((i,j) \in L'), \quad 
x_i^2 \;\;(1\leq i \leq n).
\end{align*}
\item $w:=x_{n-1}x_{n}+\al_{n,n-1}x_{n}x_{n-1}\in A_\al ^!$ is a normal element such that $A_\al^!/(w) \cong S_\al^!$.
\item
$C(A_\al)=A_\al^![w^{-1}]_0$ is isomorphic to $k\langle y_1, \dots, y_{2m},z_1, \dots, z_{2m} \rangle$ with relations
\begin{align}\label{e.C(A)e}
\begin{split}
&y_i z_i,\ z_i y_i \;\; (1\leq i\leq 2m),\quad 
y_i y_j,\ z_i z_j\;\; (1\leq i,j\leq 2m), \\
&y_{2s-1}z_{2t-1}+\al_{2t-1,2s-1}y_{2t-1}z_{2s-1},\quad 
z_{2s-1}y_{2t-1}+\al_{2t,2s}z_{2t-1}y_{2s-1}\;\;(1\leq s<t\leq m),
\\
&y_{2s}z_{2t}+\al_{2t-1,2s-1}y_{2t}z_{2s},\quad 
z_{2s}y_{2t}+\al_{2t,2s}z_{2t}y_{2s}\;\; (1\leq s<t\leq m),
\\
&y_{2s-1}z_{2t}+\al_{2s,2t}y_{2t}z_{2s-1},\quad
z_{2s-1}y_{2t}+\al_{2s-1,2t-1}z_{2t}y_{2s-1} \;\; (1\leq s,t\leq m,\ s\neq t),
\\
&y_{2s-1}z_{2s}+y_{2s}z_{2s-1}-(y_{1}z_{2}+y_{2}z_{1}), \quad
z_{2s-1}y_{2s}+z_{2s}y_{2s-1}-(z_{1}y_{2}+z_{2}y_{1}) \;\; (1\leq s\leq m),
\\
&y_{1}z_{2}+y_{2}z_{1}+z_{1}y_{2}+z_{2}y_{1}-1.
\end{split}
\end{align}
\end{enumerate}
\end{lem}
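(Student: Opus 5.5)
Parts (1)--(3) follow the pattern of Lemma \ref{l.o1}; part (4) requires a genuinely different presentation.

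For (1) and (2), I will compute $R^\perp$ directly. For $S_\al$, the relation space $R$ is spanned by $x_ix_j-\al_{ij}x_jx_i$ for $i<j$. Its annihilator in $V^*\otimes V^*$ is spanned by $x_i^2$ and $x_ix_j+\al_{ji}x_jx_i$ for $i<j$, using dual basis notation. For $A_\al$, the relation space is $R+kf$. Dualizing $f=\sum_{(i,j)\in L'}x_ix_j$ forces the functionals on the pairs in $L'$ to sum compatibly, which gives exactly the listed relations with $w=x_{n-1}x_n+\al_{n,n-1}x_nx_{n-1}$ subtracted. A dimension count ($\dim R^\perp$ drops by one) confirms the list is complete.

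For (3), I will check that $x_iw=\la'_i wx_i$ for each generator, with $\la'_i$ determined by $\al_{i,n-1}\al_{i,n}$ and the normality conditions \eqref{e.normale}. Setting $w=0$ in (2) recovers (1).

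For (4), $x_n^2=0$ now holds, so $w$ is no longer a square of a generator and the generators must change. I will define $\vph$ on the free algebra by
\[
y_i\mapsto x_i x_{n-1}w^{-1},\qquad z_i\mapsto c_i\,x_i x_{n}w^{-1}\quad(1\le i\le 2m),
\]
choosing scalars $c_i$ (products of $\al_{i,n-1},\al_{i,n}$, to be fixed during the computation) so that the structure constants match \eqref{e.C(A)e}.

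I then need to show that $\vph$ is surjective. Degree-$0$ elements of $A_\al^![w^{-1}]$ are spanned by monomials of even length times powers of $w^{-1}$. Using $x_{n-1}^2=x_n^2=0$ and $x_{n-1}x_n+\al_{n,n-1}x_nx_{n-1}=w$, I will rewrite products $x_ix_jw^{-1}$ as combinations of words in the $y$'s and $z$'s. For example, $y_iz_j$ is a multiple of $x_ix_{n-1}x_jx_nw^{-2}$, which is a multiple of $x_ix_j x_{n-1}x_n w^{-2}$. The sums $y_iz_j+(\ast)z_jy_i$ then recover $x_ix_jw^{-1}$, and $y_1z_2+\dots$ produces $1$. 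Hence the images of the $y$'s and $z$'s generate $C(A_\al)$.

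Next I verify that each relation in \eqref{e.C(A)e} lies in $\Ker\vph$. The relations $y_iy_j=0$ and $z_iz_j=0$ come from $x_{n-1}^2=x_n^2=0$ after commuting $x_{n-1}$ past $x_j$. The relations $y_iz_i=z_iy_i=0$ come from $x_i^2=0$. The mixed relations come from the anticommutation relations in (2) together with \eqref{e.normale}. The relation involving $-1$ comes from $w\cdot w^{-1}=1$, and the $L'$-relations come from the deformed relations in (2).

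Finally, the quotient of the free algebra by \eqref{e.C(A)e} must have dimension at most $2^{n-1}=2^{2m+1}$. I will show this by exhibiting a spanning set, namely alternating words in the $y$'s and $z$'s reduced by the relations. Combined with Proposition \ref{p.dim} and surjectivity, this gives the isomorphism.

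I expect the main obstacle to be this upper dimension bound, together with bookkeeping the scalars $c_i$ so that the twisted coefficients $\al_{2t-1,2s-1}$, $\al_{2t,2s}$, $\al_{2s,2t}$, $\al_{2s-1,2t-1}$ come out exactly as listed. I would first fix $c_i$ by matching the $L'$-relations, where the coefficients must equal $1$, and then use \eqref{e.normale} to normalize the remaining coefficients. For the spanning set, I would first try a PBW-type argument ordering the indices; if that becomes unwieldy, I would instead compare with the case $A_{\textnormal{comm}}$, where the even Clifford algebra is known to have dimension $2^{n-1}$, and show that the relations \eqref{e.C(A)e} give a spanning set of the same combinatorial shape.
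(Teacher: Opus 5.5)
Your outline follows the paper's proof. You compute $R^\perp$ directly for (1)--(2) and check $x_iw\in k\,wx_i$ for (3). For (4) you map a free algebra onto $C(A_\al)$ via $x_ix_{n-1}w^{-1}$ and $x_ix_nw^{-1}$, verify the relations, and then use a spanning set of size $2^{2m+1}$ together with Proposition~\ref{p.dim}. The paper's spanning set is $\{1,y_2z_1\}$ plus the alternating words $y_{i_1}z_{i_2}y_{i_3}\cdots$ and $z_{i_1}y_{i_2}z_{i_3}\cdots$ with $i_1<\cdots<i_s$.

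The step that fails is the normalization you commit to in (4). Suppose $y_i\mapsto x_ix_{n-1}w^{-1}$ is left unscaled and $z_i\mapsto c_ix_ix_nw^{-1}$. Use $w^{-1}x_{2s-1}=\al_{2s,2s-1}x_{2s-1}w^{-1}$, $w^{-1}x_{2s}=\al_{2s-1,2s}x_{2s}w^{-1}$ and \eqref{e.normale}. Then the relations $y_{2s-1}z_{2s}+y_{2s}z_{2s-1}=y_1z_2+y_2z_1$ and $y_1z_2+y_2z_1+z_1y_2+z_2y_1=1$ force $c_{2s-1}=-\al_{2s,n-1}$ and $c_{2s}=-\al_{n,2s-1}$. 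On the other hand, the mixed relation $y_{2s-1}z_{2t-1}+\al_{2t-1,2s-1}y_{2t-1}z_{2s-1}$ is killed only if $c_{2s-1}\al_{n,2s-1}=c_{2t-1}\al_{n,2t-1}$. Since \eqref{e.normale} gives $\al_{2s,n-1}\al_{n,2s-1}=\al_{2s,2s-1}\al_{n,n-1}$, the forced $c_i$ send that relation to
\[
(\al_{2t,2t-1}-\al_{2s,2s-1})\,x_{2s-1}x_{2t-1}x_{n-1}x_n w^{-2}.
\]
This is nonzero as soon as $\al_{2s-1,2s}\neq\al_{2t-1,2t}$, because the monomial survives in $S_\al^!=A_\al^!/(w)$ and $w$ is regular. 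Condition \eqref{e.normale} allows this. For example, take $n=6$ with $\al_{2s-1,2s}=q_s$, and for $s\neq t$ set $\al_{2s-1,2t-1}=1$, $\al_{2s-1,2t}=q_s$, $\al_{2s,2t}=q_s^{-1}q_t$, with $q_1\neq q_2$.

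So for $m\ge 2$ your $\vph$ does not factor through the presented algebra. Your plan to fix the $c_i$ from the $L'$-relations and then normalize the rest with \eqref{e.normale} will stall. Write $y_i\mapsto\lambda_ix_ix_{n-1}w^{-1}$ and $z_i\mapsto\mu_ix_ix_nw^{-1}$. The mixed relations only see the ratios $\lambda_i/\mu_i$, while the $L'$-relations pin down the cross products $\lambda_{2s-1}\mu_{2s}$ and $\lambda_{2s}\mu_{2s-1}$. With $\lambda_i=1$, nothing is left to adjust the ratios. The paper's valid choice puts the scalars on the $y$'s: $y_{2s-1}\mapsto-\al_{n,2s-1}x_{2s-1}x_{n-1}w^{-1}$, $y_{2s}\mapsto-\al_{2s,n-1}x_{2s}x_{n-1}w^{-1}$, $z_i\mapsto x_ix_nw^{-1}$.

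There is also a smaller slip in your surjectivity step. Pair $y_iz_j$ with $z_iy_j$, not with $z_jy_i$, because for $(i,j)\in L'$ the products $x_ix_j$ and $x_jx_i$ are not proportional. The correct observation is that $x_ix_jw^{-1}$ is a linear combination of $x_ix_{n-1}w^{-1}x_jx_nw^{-1}$ and $x_ix_nw^{-1}x_jx_{n-1}w^{-1}$.
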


\begin{proof}
(1) and (2) follow from a direct computation.

(3) Since $x_{2s-1}w=\al_{2s,2s-1}wx_{2s-1}$
and  $x_{2s}w=\al_{2s-1,2s}wx_{2s}$
for $1\leq s\leq m+1$, we see that $w$ is normal.
The last isomorphism immediately follows from (1) and (2).

(4) We define an algebra homomorphism $\vph: k\langle y_1, \dots, y_{2m}, z_1, \dots, z_{2m} \rangle \to C(A_\al)$ by 
\begin{align*}
&y_{2s-1} \mapsto -\al_{n,2s-1}x_{2s-1}x_{n-1}w^{-1},
&&y_{2s}\mapsto -\al_{2s, n-1}x_{2s}x_{n-1}w^{-1},\\
&z_{2s-1} \mapsto x_{2s-1}x_{n}w^{-1}, 
&&z_{2s} \mapsto x_{2s}x_{n}w^{-1} 
&& \quad\text{for $1\leq s \leq m$.}
\end{align*}

Since $w^{-1}x_{2s-1}=\al_{2s,2s-1}x_{2s-1}w^{-1}$ and $w^{-1}x_{2s}=\al_{2s-1,2s}x_{2s}w^{-1}$ for $1\leq s \leq m+1$, it follows that
$x_{i}x_{j}w^{-1}$ is a linear combination of
$x_{i}x_{n-1}w^{-1}x_{j}x_{n}w^{-1}$ and $x_{i}x_{n}w^{-1}x_{j}x_{n-1}w^{-1}$
for any $1\leq i,j \leq 2m$.
This implies that $\{x_ix_{n-1}w^{-1}, x_{i}x_{n}w^{-1} \mid 1\leq i \leq2m \}$ generates $C(A_\al)$, and hence $\vph$ is surjective. 

It is easy to check that $\vph(y_iz_i)=\vph(z_iy_i)=\vph(
y_iy_j)=\vph(z_iz_j)=0$.
Moreover, we have
\begin{align*}
&\vph(y_{2s-1}z_{2t-1}+\al_{2t-1,2s-1}y_{2t-1}z_{2s-1})\\
&=-\al_{n,2s-1}x_{2s-1}x_{n-1}w^{-1}x_{2t-1}x_{n}w^{-1}
-\al_{2t-1,2s-1}\al_{n,2t-1}x_{2t-1}x_{n-1}w^{-1}x_{2s-1}x_{n}w^{-1}\\
&\overset{\eqref{e.normale}}{=}\al_{n,2s-1}\al_{n,2t-1}(x_{2s-1}x_{2t-1}x_{n-1}w^{-1}x_{n}w^{-1}
+\al_{2t-1,2s-1}x_{2t-1}x_{2s-1}x_{n-1}w^{-1}x_{n}w^{-1})\\
&=\al_{n,2s-1}\al_{n,2t-1}(1-\al_{2t-1,2s-1}\al_{2s-1,2t-1})
x_{2s-1}x_{2t-1}x_{n-1}w^{-1}x_{n}w^{-1}=0,
\end{align*}
\begin{align*}
&\vph(y_{2s}z_{2t}+\al_{2t-1,2s-1}y_{2t}z_{2s})\\
&=-\al_{2s, n-1}x_{2s}x_{n-1}w^{-1}x_{2t}x_{n}w^{-1}
-\al_{2t-1,2s-1}\al_{2t, n-1}x_{2t}x_{n-1}w^{-1}x_{2s}x_{n}w^{-1}\\
&\overset{\eqref{e.normale}}{=}
\al_{2s, n-1}\al_{2t, n-1}(\al_{2t-1,2t}x_{2s}x_{2t}x_{n-1}w^{-1}x_{n}w^{-1}
+\al_{2s-1,2t}x_{2t}x_{2s}x_{n-1}w^{-1}x_{n}w^{-1})\\
&=
\al_{2s, n-1}\al_{2t, n-1}(\al_{2t-1,2t}
-\al_{2s-1,2t}\al_{2s,2t})x_{2s}x_{2t}x_{n-1}w^{-1}x_{n}w^{-1}\overset{\eqref{e.normale}}{=}0,
\end{align*}
\begin{align*}
&\vph(y_{2s-1}z_{2t}+\al_{2s,2t}y_{2t}z_{2s-1})\\
&=-\al_{n,2s-1}x_{2s-1}x_{n-1}w^{-1}x_{2t}x_{n}w^{-1}
-\al_{2s,2t}\al_{2t, n-1}x_{2t}x_{n-1}w^{-1}x_{2s-1}x_{n}w^{-1}\\
&\overset{\eqref{e.normale}}{=}\al_{2t,n-1}\al_{n,2s-1}(\al_{2t-1,2t}x_{2s-1}x_{2t}x_{n-1}w^{-1}x_{n}w^{-1}
+\al_{2s,2t}x_{2t}x_{2s-1}x_{n-1}w^{-1}x_{n}w^{-1})\\
&=\al_{2t,n-1}\al_{n,2s-1}(\al_{2t-1,2t}
-\al_{2s,2t}\al_{2s-1,2t})x_{2s-1}x_{2t}x_{n-1}w^{-1}x_{n}w^{-1}\overset{\eqref{e.normale}}{=}0.
\end{align*}
Similarly, one can verify that $\vph(z_{2s-1}y_{2t-1}+\al_{2t,2s}z_{2t-1}y_{2s-1})=\vph(z_{2s}y_{2t}+\al_{2t,2s}z_{2t}y_{2s})=\vph(z_{2s-1}y_{2t}+\al_{2s-1,2t-1}z_{2t}y_{2s-1})=0$.

Since $w=x_{2s-1}x_{2s}+\al_{2s,2s-1}x_{2s}x_{2s-1}$ for every $1\leq s \leq m$, we obtain
\begin{align*}
&x_{n-1}x_{n}w^{-1}=(x_{2s-1}x_{2s}+\al_{2s,2s-1}x_{2s}x_{2s-1})w^{-1}x_{n-1}x_{n}w^{-1}\\
&=-\al_{n,n-1}\al_{n-1,2s}\al_{2s,2s-1}x_{2s-1}x_{n-1}w^{-1}x_{2s}x_{n}w^{-1}
-\al_{n,n-1}\al_{n-1,2s-1}x_{2s}x_{n-1}w^{-1}x_{2s-1}x_{n}w^{-1}
\\
&\overset{\eqref{e.normale}}{=}-\al_{n,2s-1}x_{2s-1}x_{n-1}w^{-1}x_{2s}x_{n}w^{-1}
-\al_{2s,n-1}x_{2s}x_{n-1}w^{-1}x_{2s-1}x_{n}w^{-1}
=y_{2s-1}z_{2s}+y_{2s}z_{2s-1},
\end{align*}
and similarly
$\al_{n,n-1}x_{n}x_{n-1}w^{-1}=z_{2s-1}y_{2s}+z_{2s}y_{2s-1}$.
These show $\vph(y_{2s-1}z_{2s}+y_{2s}z_{2s-1}-(y_{1}z_{2}+y_{2}z_{1}))=\vph(z_{2s-1}y_{2s}+z_{2s}y_{2s-1}-(z_{1}y_{2}+z_{2}y_{1}))=\vph(y_{1}z_{2}+y_{2}z_{1}+z_{1}y_{2}+z_{2}y_{1}-1)=0$.

Therefore, the relations in \eqref{e.C(A)e} lie in $\Ker \vph$. This yields that $\vph$ induces a surjective algebra homomorphism from $k\langle y_1, \dots, y_{2m}, z_1, \dots, z_{2m}\rangle$ subject to the relations \eqref{e.C(A)e} to $C(A_\al)$.

The algebra $k\langle y_1, \dots, y_{2m}, z_1, \dots, z_{2m}\rangle$ subject to the relations \eqref{e.C(A)e} has a $k$-basis consisting of
\[
\{1,\ y_2z_1,\ y_{i_1}z_{i_2}y_{i_3}\cdots u_{i_s},\ z_{i_1}y_{i_2}z_{i_3}\cdots u'_{i_s} \mid 1\leq i_1<\cdots<i_s\leq 2m,\ 1\leq s\leq 2m\},
\]
where the terminal symbols $u$ and $u'$ are determined by the parity of $s$ as above.
Therefore, the dimension of this algebra is $2^{2m+1}$.
Since  $\dim_k C(A_\al)=2^{2m+1}$ by Proposition \ref{p.dim}, the induced homomorphism is an isomorphism.
\end{proof}

For the remainder of this section, assume that 
$f \in S_\al$ is normal, and use the following notation.
\begin{itemize}
\item For $1\leq s\leq m$, fix $\be_{2s-1,2s} \in k$ such that $\be_{2s-1,2s}^2 = \al_{2s-1,2s}$ and define $\be_{2s,2s-1} \in k$ by $\be_{2s,2s-1}= \be_{2s-1,2s}^{-1}$. (Then $\be_{2s,2s-1}^2=\al_{2s,2s-1}$.) 
\item For $1\leq i,j\leq 2m$, define $\widehat\alpha_{ij}=
\begin{cases}
\be_{i+1,i}\al_{ij}\be_{j,j+1} &\text{if $i,j$ are odd},\\
\be_{i+1,i}\al_{ij}\be_{j,j-1} &\text{if $i$ is odd and $j$ is even},\\
\be_{i-1,i}\al_{ij}\be_{j,j+1} &\text{if $i$ is even and $j$ is odd},\\
\be_{i-1,i}\al_{ij}\be_{j,j-1} &\text{if $i, j$ are even}.
\end{cases}$
\item For $1\leq r<s \leq m$, define the elements $F^{(1)}_{sr}, F^{(2)}_{sr}, G^{(1)}_{sr}, G^{(2)}_{sr} \in C(A_\al)$ by
\begin{align*}
&F_{sr}^{(1)} = 1-(1-\widehat\alpha_{2s,2r})z_{2r-1}y_{2r}, 
&&F_{sr}^{(2)} = 1-(1-\widehat\alpha_{2s,2r})z_{2r}y_{2r-1},\\
&G_{sr}^{(1)}= 1-(1-\widehat\alpha_{2s,2r})y_{2r-1}z_{2r}, 
&&G_{sr}^{(2)} = 1-(1-\widehat\alpha_{2s,2r})y_{2r}z_{2r-1}, 
\end{align*}
and define the elements $E, E' \in C(A_\al)$ by
\[E= z_1y_2+z_2y_1, \qquad\qquad  E'= y_1z_2+y_2z_1, \]
where we identify the isomorphism obtained in Lemma \ref{l.e1}(4). 
\end{itemize}
Note that $E$ and $E'$ are idempotents of $C(A_\al)$ satisfying
\begin{align}\label{e.ip}
E=z_{2s-1}y_{2s}+z_{2s}y_{2s-1},\quad 
E'=y_{2s-1}z_{2s}+y_{2s}z_{2s-1},\quad
E+E'=1
\end{align}
for any $1 \leq s\leq m$ by \eqref{e.C(A)e}.

\begin{lem}\label{l.e2}
Assume that $f \in S_\al$ is normal.
\begin{enumerate}
\item $\widehat\alpha_{2r-1,i}\widehat\alpha_{2r,i}=\widehat\alpha_{i,2r-1}\widehat\alpha_{i,2r}= 1$ for $1\leq r \leq m$ and $1\leq i \leq 2m$.
\item In $C(A_{\al})$, for $a \in\{1,2\}$, we have
\begin{align*}
F_{sr}^{(1)}z_i=
\begin{cases}
\widehat\alpha_{i,2s}z_{i} G_{sr}^{(1)} & \text{if $i=2r-1,2r$,}\\
z_i G_{sr}^{(1)} &\text{otherwise,}
\end{cases}
\qquad 
F_{sr}^{(2)}z_i=
\begin{cases}
\widehat\alpha_{2s,i}z_{i} G_{sr}^{(2)} & \text{if $i=2r-1,2r$,}\\
z_i G_{sr}^{(2)} &\text{otherwise,}
\end{cases}\\
G_{sr}^{(1)}y_{i}=
\begin{cases}
\widehat\alpha_{i,2s}y_{i} F_{sr}^{(1)}&\text{if $i=2r-1,2r$,}\\
y_{i} F_{sr}^{(1)}&\text{otherwise,}
\end{cases}
\qquad
G_{sr}^{(2)}y_{i}=
\begin{cases}
\widehat\alpha_{2s,i}y_{i} F_{sr}^{(2)}&\text{if $i=2r-1,2r$,}\\
y_{i} F_{sr}^{(2)}&\text{otherwise.}
\end{cases}
\end{align*}
\item Any two elements of $\{F^{(a)}_{sr}, G^{(a)}_{sr}\mid a \in\{1,2\},\ 1\leq r<s\leq m\} $ commute with each other in $C(A_{\al})$.
\item In particular, one has
$F^{(1)}_{sr}F^{(2)}_{sr}=F^{(2)}_{sr}F^{(1)}_{sr}=1-(1-\widehat\alpha_{2s,2r})E$ and 
$G^{(1)}_{sr}G^{(2)}_{sr}=G^{(2)}_{sr}G^{(1)}_{sr}=1-(1-\widehat\alpha_{2s,2r})E'$ in $C(A_{\al})$.
\end{enumerate}
\end{lem}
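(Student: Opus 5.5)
I will follow the template of Lemma \ref{l.o2}, now working in the presentation \eqref{e.C(A)e} of Lemma \ref{l.e1}(4). The plan is to prove part (1) first, from the definition of $\widehat\alpha$ and the normality condition \eqref{e.normale}, and then derive (2)--(4) by manipulating the relations of $C(A_\al)$.

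For (1), take $i$ odd; the even case is identical. By definition, $\widehat\alpha_{2r-1,i}\widehat\alpha_{2r,i}=\be_{2r,2r-1}\be_{2r-1,2r}\,\al_{2r-1,i}\al_{2r,i}\,\be_{i,i+1}^2$. Since $\be_{2r,2r-1}\be_{2r-1,2r}=1$ and $\be_{i,i+1}^2=\al_{i,i+1}$, this equals $\al_{2r-1,i}\al_{2r,i}\al_{i,i+1}$. By \eqref{e.normale}, $\al_{2r-1,i}\al_{2r,i}=\al_{i+1,i}$ when $i\notin\{2r-1,2r\}$. The cases $i=2r-1,2r$ reduce directly using $\al_{jj}=1$. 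The product $\widehat\alpha_{i,2r-1}\widehat\alpha_{i,2r}$ is handled the same way: the two $\be_{2r-1,2r},\be_{2r,2r-1}$ factors cancel, and the leftover $\be_{i,i\pm1}^2$ is absorbed by \eqref{e.normale}.

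For (2), I will expand $F^{(1)}_{sr}z_i=z_i-(1-\widehat\alpha_{2s,2r})z_{2r-1}y_{2r}z_i$ and split into cases.
\begin{itemize}
\item If $i=2r$: $y_{2r}z_{2r}=0$, so $F^{(1)}_{sr}z_{2r}=z_{2r}$. Also $z_{2r}y_{2r-1}z_{2r}=0$, because $z_{2r}y_{2r-1}$ times $z_{2r}$ vanishes: use $E=z_{2r-1}y_{2r}+z_{2r}y_{2r-1}$ and $z_{2r-1}y_{2r}z_{2r}=0$. Hence $z_{2r}G^{(1)}_{sr}$ has to be compared with $z_{2r}-(1-\widehat\alpha_{2s,2r})z_{2r}y_{2r-1}z_{2r}$. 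Here I need $y_{2r-1}z_{2r}z_{2r}$-type terms, which vanish since $zz=0$.
\item If $i=2r-1$: rewrite $y_{2r}z_{2r-1}=E'-y_{2r-1}z_{2r}$ via \eqref{e.ip}. Then reduce $z_{2r-1}E'=z_{2r-1}$ and $z_{2r-1}y_{2r-1}=0$, which produces the scalar, converted to $\widehat\alpha_{i,2s}$ by (1).
\item If $i\notin\{2r-1,2r\}$: use the $yz$ and $zy$ commutation relations of \eqref{e.C(A)e} to move $z_i$ leftward past $y_{2r}$ and $z_{2r-1}$. The relations swap indices, e.g.\ $y_{2r}z_i\mapsto y_iz_{2r}$, so $z_{2r-1}y_{2r}z_i$ becomes a multiple of $z_iy_{2r-1}z_{2r}$. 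This is exactly why $F$ turns into $G$, and the accumulated scalar must be $1$ by (1).
\end{itemize}
The computations for $G^{(a)}_{sr}y_i$ are symmetric, obtained by exchanging $y\leftrightarrow z$.

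\textbf{Main obstacle.} The hardest step is the bookkeeping in the generic case of (2): the index-swapping relations must produce exactly $z_iy_{2r-1}z_{2r}$ with coefficient $1$. I would first verify the scalar products against \eqref{e.normale} directly in terms of $\al$'s; the $\be$'s drop out there, and only the square-root choices in $\widehat\alpha_{i,2s}$ for $i=2r-1,2r$ matter.

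For (3), I will show that each $F^{(a)}_{sr}$ and $G^{(a)}_{sr}$ commutes with the quadratic elements $z_{2p-1}y_{2p}$, $z_{2p}y_{2p-1}$, $y_{2p-1}z_{2p}$, $y_{2p}z_{2p-1}$. Applying (2) twice, $F\cdot zy=z\,G\,y=zy\,F$ up to scalars; for $p=r$ the two scalars are $\widehat\alpha_{i,2s}$ values whose product is $1$ by (1), and otherwise they are trivially $1$. Since the $F$'s and $G$'s are $1$ plus scalar multiples of such elements, they commute pairwise. For (4), I will expand the product as
\[
F^{(1)}_{sr}F^{(2)}_{sr}=1-c\,(z_{2r-1}y_{2r}+z_{2r}y_{2r-1})+c^2\,z_{2r-1}y_{2r}z_{2r}y_{2r-1},\qquad c=1-\widehat\alpha_{2s,2r}.
\]
The cross term vanishes because $y_{2r}z_{2r}=0$, and the linear term equals $-cE$ by \eqref{e.ip}. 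The reverse product and the $G$ versions are identical with $E'$ in place of $E$.
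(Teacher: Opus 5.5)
Your overall route is the paper's: a direct computation for (1), a case split in (2) using the index-swapping relations for generic $i$, commutation with the quadratic elements for (3), and expansion for (4). Parts (1) and (4), the generic case of (2), and the case $i=2r-1$ of (2) are fine.

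\textbf{The gap.} In the case $i=2r$ of (2) you claim $z_{2r}y_{2r-1}z_{2r}=0$, and this is false. By \eqref{e.ip}, $E=z_{2r-1}y_{2r}+z_{2r}y_{2r-1}$ and $E'=y_{2r-1}z_{2r}+y_{2r}z_{2r-1}$. Left-multiply $E+E'=1$ by $z_{2r}$ and use $z_{2r}z_j=0$ and $z_{2r}y_{2r}=0$; this gives $z_{2r}y_{2r-1}z_{2r}=z_{2r}$. Equivalently, $Ez_{2r}=z_{2r}$, not $0$. This is exactly the argument $z_{2r-1}E'=z_{2r-1}$ that you use yourself for $i=2r-1$.

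Your identity leads to the wrong result. It would give $z_{2r}G^{(1)}_{sr}=z_{2r}=F^{(1)}_{sr}z_{2r}$, i.e.\ scalar $1$. That contradicts the asserted scalar $\widehat\alpha_{2r,2s}$ whenever $\widehat\alpha_{2s,2r}\neq 1$, which is the only case in which $F^{(1)}_{sr}\neq 1$. The correct computation is $z_{2r}G^{(1)}_{sr}=z_{2r}-(1-\widehat\alpha_{2s,2r})z_{2r}=\widehat\alpha_{2s,2r}z_{2r}$. Since $\widehat\alpha_{2r,2s}\widehat\alpha_{2s,2r}=1$, this yields $F^{(1)}_{sr}z_{2r}=z_{2r}=\widehat\alpha_{2r,2s}z_{2r}G^{(1)}_{sr}$.

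The paper handles every special case of $F^{(a)}_{sr}$ and $G^{(a)}_{sr}$ with four identities, all obtained this way: $y_{2r-1}z_{2r}y_{2r-1}=y_{2r-1}$, $y_{2r}z_{2r-1}y_{2r}=y_{2r}$, $z_{2r-1}y_{2r}z_{2r-1}=z_{2r-1}$, and $z_{2r}y_{2r-1}z_{2r}=z_{2r}$. Your argument for (3) at $p=r$ also depends on these, because it needs the two special-case scalars to multiply to $1$.

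\textbf{Two smaller remarks.} First, the step you flag as the main obstacle is routine. For $i\notin\{2r-1,2r\}$ the two swaps pick up coefficients $-\alpha_{2r,2t}$ and $-\alpha_{2t,2r}$ when $i=2t-1$, or $-\alpha_{2t-1,2r-1}$ and $-\alpha_{2r-1,2t-1}$ when $i=2t$. In each case the product is $1$ directly, without appeal to (1). The delicate part is the special indices.

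Second, in (3), the commutation of $F^{(a)}_{sr}$ with $G^{(b)}_{qp}$ (equivalently, of $F$ with the $yz$-type quadratic elements) does not come from applying (2) twice. It is immediate from $y_iy_j=z_iz_j=0$: both $F^{(a)}_{sr}G^{(b)}_{qp}$ and $G^{(b)}_{qp}F^{(a)}_{sr}$ reduce to $1$ minus the two linear terms.
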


\begin{proof}
(1) We have
\[
\widehat\alpha_{2r-1,i}\widehat\alpha_{2r,i}
=
\begin{cases}
\be_{2r,2r-1}\al_{2r-1,i}\be_{i,i+1}\be_{2r-1,2r}\al_{2r,i}\be_{i,i+1}
=\al_{2r-1,i}\al_{2r,i}\al_{i,i+1}\overset{\eqref{e.normale}}{=}1
& \text{if $i$ is odd},\\
\be_{2r,2r-1}\al_{2r-1,i}\be_{i,i-1}\be_{2r-1,2r}\al_{2r,i}\be_{i,i-1}
=\al_{2r-1,i}\al_{2r,i}\al_{i,i-1}\overset{\eqref{e.normale}}{=}1
&\text{if $i$ is even}.
\end{cases}
\]

(2) \underline{The cases $i \not\in \{2r-1,2r\}$.}
To prove $F_{sr}^{(1)}z_i=z_i G_{sr}^{(1)}$, it suffices to show that $z_{2r-1}y_{2r}z_{i}=z_{i}y_{2r-1}z_{2r}$.
If $i$ is odd, then $z_{2r-1}y_{2r}z_{i}=-\al_{2r,i+1}z_{2r-1}y_{i}z_{2r}=z_{i}y_{2r-1}z_{2r}$.
If $i$ is even, then $z_{2r-1}y_{2r}z_{i}=-\al_{i-1,2r-1}z_{2r-1}y_{i}z_{2r}=z_{2r-1}y_{i}z_{2r}$. Thus the claim holds.

To prove $F_{sr}^{(2)}z_i =z_i G_{sr}^{(2)}$, it suffices to show that
$z_{2r}y_{2r-1}z_{i}=z_{i}y_{2r}z_{2r-1}$.
If $i$ is odd, then $z_{2r}y_{2r-1}z_{i}=-\al_{i,2r-1}z_{2r}y_{i}z_{2r-1}=z_{i}y_{2r}z_{2r-1}$.
If $i$ is even, then $z_{2r}y_{2r-1}z_{i}=-\al_{2r,i}z_{2r}y_{i}z_{2r-1}=z_{i}y_{2r}z_{2r-1}$. Thus the claim holds.

The last two cases are verified in a similar manner.

\underline{The cases $i \in \{2r-1,2r\}$.} \eqref{e.C(A)e} implies $y_{2r-1}z_{2r}+y_{2r}z_{2r-1}+
z_{2r-1}y_{2r}+z_{2r}y_{2r-1}=1$
for any $1\leq r\leq m$. By multiplying this equality on the left by $y_{2r-1},y_{2r},z_{2r-1},z_{2r}$, respectively, we obtain
\begin{align*}
&y_{2r-1}z_{2r}y_{2r-1}=y_{2r-1}, &&y_{2r}z_{2r-1}y_{2r}=y_{2r},
&z_{2r-1}y_{2r}z_{2r-1}=z_{2r-1}, &&z_{2r}y_{2r-1}z_{2r}=z_{2r}.
\end{align*}
Using these equalities, we get
\begin{align*}
F_{sr}^{(1)}z_{2r-1}
&=z_{2r-1}-(1-\widehat\alpha_{2s,2r})z_{2r-1}y_{2r}z_{2r-1}
=\widehat\alpha_{2s,2r}z_{2r-1}\\
&=\widehat\alpha_{2s,2r}(z_{2r-1}-(1-\widehat\alpha_{2s,2r})z_{2r-1}y_{2r-1}z_{2r})
\overset{(1)}{=}\widehat\alpha_{2r-1,2s}z_{2r-1}G_{sr}^{(1)},\\
F_{sr}^{(1)}z_{2r} &=z_{2r}-(1-\widehat\alpha_{2s,2r})z_{2r-1}y_{2r}z_{2r}
=z_{2r}\\
&=\widehat\alpha_{2s,2r}^{-1}(z_{2r}-(1-\widehat\alpha_{2s,2r})z_{2r}y_{2r-1}z_{2r})
=\widehat\alpha_{2r,2s}z_{2r}G_{sr}^{(1)},\\
F_{sr}^{(2)}z_{2r-1}
&=z_{2r-1}-(1-\widehat\alpha_{2s,2r})z_{2r}y_{2r-1}z_{2r-1}
=z_{2r-1}\\
&=\widehat\alpha_{2s,2r}^{-1}(z_{2r-1}-(1-\widehat\alpha_{2s,2r})z_{2r-1}y_{2r}z_{2r-1})
=\widehat\alpha_{2s,2r}^{-1}z_{2r-1}G_{sr}^{(2)}
\overset{(1)}{=}\widehat\alpha_{2s,2r-1}z_{2r-1}G_{sr}^{(2)},\\
F_{sr}^{(2)}z_{2r} &=z_{2r}-(1-\widehat\alpha_{2s,2r})z_{2r}y_{2r-1}z_{2r}
=\widehat\alpha_{2s,2r}z_{2r}\\
&=\widehat\alpha_{2s,2r}(z_{2r}-(1-\widehat\alpha_{2s,2r})z_{2r}y_{2r}z_{2r-1})
=\widehat\alpha_{2s,2r}z_{2r}G_{sr}^{(2)}.
\end{align*}
The remaining cases are treated similarly.

(3) Since $y_iy_j=z_iz_j=0$, we see easily that $F_{sr}^{(a)}G_{qp}^{(b)}=G_{qp}^{(b)}F_{sr}^{(a)}$.
Since
\[
F_{sr}^{(a)}z_{2p-1}y_{2p}=z_{2p-1}y_{2p}F_{sr}^{(a)},\qquad
F_{sr}^{(a)}z_{2p}y_{2p-1}=z_{2p}y_{2p-1}F_{sr}^{(a)}
\]
are obtained from (2) in both cases $p \neq r$ and $p = r$, it follows that $F_{sr}^{(a)}F_{qp}^{(b)}=F_{qp}^{(b)}F_{sr}^{(a)}$.
The equality $G_{sr}^{(a)}G_{qp}^{(b)}=G_{qp}^{(b)}G_{sr}^{(a)}$ can be proved in the same way.

(4) By \eqref{e.ip}, we have
$F_{sr}^{(1)}F_{sr}^{(2)}
=1-(1-\widehat\alpha_{2s,2r})(z_{2r-1}y_{2r}+z_{2r}y_{2r-1})=1-(1-\widehat\alpha_{2s,2r})E$ and $G_{sr}^{(1)}G_{sr}^{(2)}
=1-(1-\widehat\alpha_{2s,2r})(y_{2r-1}z_{2r}+y_{2r}z_{2r-1})=1-(1-\widehat\alpha_{2s,2r})E'$. Hence the result.
\end{proof}

\begin{thm}\label{t.eiso}
$C(A_\al)$ is isomorphic to $C(A_{\textnormal{comm}})$.
\end{thm}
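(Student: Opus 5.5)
Following the proof of Theorem \ref{t.oiso}, I will write down a presentation of $C(A_{\textnormal{comm}})$ and build a homomorphism into $C(A_\al)$ from twisted generators. By Lemma \ref{l.e1}(4) with all $\al_{ij}=1$, $C(A_{\textnormal{comm}})$ is generated by $y_1',\dots,y_{2m}',z_1',\dots,z_{2m}'$ subject to the relations \eqref{e.C(A)e} with every $\al$ replaced by $1$. I define $\psi:k\langle y',z'\rangle\to C(A_\al)$ on generators, working in the pair $s$ (for $1\le s\le m$) and twisting by the earlier pairs $r<s$:
\begin{align*}
&\psi(z_{2s-1}')=c_{2s-1}\,z_{2s-1}\prod_{r=1}^{s-1}G^{(1)}_{sr}, &&\psi(y_{2s-1}')=d_{2s-1}\,y_{2s-1}\prod_{r=1}^{s-1}F^{(1)}_{sr},\\
&\psi(z_{2s}')=c_{2s}\,z_{2s}\prod_{r=1}^{s-1}G^{(2)}_{sr}, &&\psi(y_{2s}')=d_{2s}\,y_{2s}\prod_{r=1}^{s-1}F^{(2)}_{sr}.
\end{align*}
The scalars $c_i,d_i$ are built from the square roots $\be_{2s-1,2s}$ and from entries $\al_{n,i}$, $\al_{i,n-1}$. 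They are chosen so that after normalisation every surviving structure constant becomes some $\widehat\al_{ij}$ as defined before Lemma \ref{l.e2}. The placement of $F$ versus $G$ is dictated by Lemma \ref{l.e2}(2): $F$ passes $z_i$ and becomes $G$, and $G$ passes $y_i$ and becomes $F$. In any alternating word $y'z'$ or $z'y'$, the twisting factors then all migrate to the right and collect into a single product of commuting elements (Lemma \ref{l.e2}(3)). Along the way the reordered term picks up a scalar, which must cancel the $\al$-coefficient in the corresponding relation of \eqref{e.C(A)e}, by Lemma \ref{l.e2}(1).

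Next I will check the relations. The monomial relations $y_iz_i$, $z_iy_i$, $y_iy_j$, $z_iz_j$ are preserved, because the twisting factors can be moved to the right, leaving $y_iz_i\cdot(\cdots)$ and similar products, which vanish. For the skew relations with $s<t$, I split into the four parity cases exactly as in Theorem \ref{t.oiso}. Each case moves the products to the right using Lemma \ref{l.e2}(2), and the leftover scalar is checked against the twisted coefficient using \eqref{e.normale}. The essential relations involve the pair $(2s-1,2s)$ and the final relation summing to $1$. For these, the products attached to $y_{2s-1}'z_{2s}'$ and $y_{2s}'z_{2s-1}'$ combine to $\prod_r G^{(1)}_{sr}G^{(2)}_{sr}$ or the analogous $F$ product. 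By Lemma \ref{l.e2}(4) this equals $\prod_r(1-(1-\widehat\al_{2s,2r})E')$. Since $E'$ is idempotent and $y_{2s-1}z_{2s}+y_{2s}z_{2s-1}=E'$ by \eqref{e.ip}, multiplying gives $E'\prod_r(1-(1-\widehat\al_{2s,2r})E')=E'\prod_r\widehat\al_{2s,2r}$. This constant has to be absorbed by the choice of $c_i,d_i$, so that $\psi(y_{2s-1}'z_{2s}'+y_{2s}'z_{2s-1}')=E'$; similarly the $z'y'$ side maps to $E$. Then the last relation maps to $E+E'-1=0$.

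Surjectivity and the final count go as before. I argue by induction on $s$. Since $E'$ and each factor $1-(1-\widehat\al)E'$ lie in the image once the earlier pairs do, and these factors are invertible (with inverse $1-(1-\widehat\al^{-1})E'$ when $E'$ is idempotent), I can peel off the products and recover $y_{2s-1},y_{2s},z_{2s-1},z_{2s}$. Thus $\overline\psi$ is onto, and both algebras have dimension $2^{2m+1}$ by Proposition \ref{p.dim}, so $\overline\psi$ is an isomorphism. The main obstacle I expect is the bookkeeping of the scalar normalisations $c_i,d_i$. They must turn the many distinct coefficients of \eqref{e.C(A)e}, namely $\al_{2t-1,2s-1}$, $\al_{2t,2s}$, $\al_{2s,2t}$ and $\al_{2s-1,2t-1}$, into the uniform $\widehat\al$ form. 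At the same time they must cancel the constant $\prod_r\widehat\al_{2s,2r}$ that appears in the pair relations. This is the reason for introducing the square roots $\be$. I would first try the choice $c_{2s-1}d_{2s}=c_{2s}d_{2s-1}=\prod_{r<s}\widehat\al_{2s,2r}^{-1}$ and verify that it is consistent with the skew relations via \eqref{e.normale}.
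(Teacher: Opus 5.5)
Your strategy is the paper's: the same presentation of $C(A_{\textnormal{comm}})$, $F$-twists on the $y$'s and $G$-twists on the $z$'s, the pair relations collapsed via Lemma~\ref{l.e2}(4) and idempotency (you correctly identify $y_{2s-1}z_{2s}+y_{2s}z_{2s-1}$ with $E'$ by \eqref{e.ip}), and the same induction plus dimension count.

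\textbf{The gap.} $\psi$ is never actually specified. The scalars $c_i,d_i$ are left open. The only condition you impose, $c_{2s-1}d_{2s}=c_{2s}d_{2s-1}=\prod_{r<s}\widehat\al_{2s,2r}^{-1}$, comes from the pair relations and is not sufficient. The cross-pair skew relations impose an independent condition on the ratios $c_i/d_i$, and this is exactly where the square roots $\be$ are needed.

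For example, take $s<t$. Move the twists right with Lemma~\ref{l.e2}(2),(3); only $F^{(1)}_{ts}$ produces a scalar, namely $\widehat\al_{2s-1,2t}=\widehat\al_{2t,2s}$. Then use $y_{2s-1}z_{2t-1}=-\al_{2t-1,2s-1}y_{2t-1}z_{2s-1}$ to get
\[
\psi(y_{2s-1}'z_{2t-1}'+y_{2t-1}'z_{2s-1}')=\bigl(d_{2t-1}c_{2s-1}\widehat\al_{2t,2s}-d_{2s-1}c_{2t-1}\al_{2t-1,2s-1}\bigr)\,y_{2t-1}z_{2s-1}\,\Pi ,
\]
where $\Pi$ is a product of $G$'s and hence a unit. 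So one needs $(c_{2s-1}/d_{2s-1})/(c_{2t-1}/d_{2t-1})=\al_{2t-1,2s-1}/\widehat\al_{2t,2s}=\be_{2s,2s-1}\be_{2t-1,2t}$, using \eqref{e.normale}.

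The $z'y'$ relation for the same indices asks for the ratio $\widehat\al_{2t,2s}/\al_{2t,2s}$. The two requirements agree only because $\widehat\al_{2t,2s}^2=\al_{2t-1,2s-1}\al_{2t,2s}$. Doing the same for the remaining families, the skew relations force $c_{2s-1}/d_{2s-1}=c_{2s}/d_{2s}=\kappa\,\be_{2s,2s-1}$ for a single constant $\kappa$.

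Your product condition does not imply this. The choice $c_{2s-1}=d_{2s-1}=1$, $c_{2s}=d_{2s}=\prod_{r<s}\widehat\al_{2r,2s}$ satisfies it. Yet it sends the relation above to $\widehat\al_{2t,2s}(1-\be_{2s,2s-1}\be_{2t-1,2t})\,y_{2t-1}z_{2s-1}\Pi$. This is nonzero whenever $\be_{2s-1,2s}\neq\be_{2t-1,2t}$, since $y_{2t-1}z_{2s-1}$ is a nonzero multiple of a basis element.

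\textbf{How to close it.} The missing step is to exhibit scalars satisfying both conditions at once. The paper takes $d_{2s-1}=1$, $c_{2s-1}=\be_{2s,2s-1}$, $c_{2s}=\prod_{r<s}\widehat\al_{2r,2s}$, and $d_{2s}=\be_{2s-1,2s}\prod_{r<s}\widehat\al_{2r,2s}$, so $\kappa=1$. Your product condition holds for this choice because $\widehat\al_{2r,2s}\widehat\al_{2s,2r}=1$. With this explicit $\psi$, the rest of your outline goes through as in the paper.

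No entries $\al_{n,i}$ or $\al_{i,n-1}$ are needed; they were already absorbed into $\vph$ in Lemma~\ref{l.e1}(4).

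A minor point: for $\psi(y_i'y_j')=0$ you cannot move the $F$'s past $y_j$ using Lemma~\ref{l.e2}(2). Instead, $F^{(a)}_{sr}y_j=y_j$ holds directly because $y_{2r}y_j=y_{2r-1}y_j=0$, which gives the same conclusion.
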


\begin{proof}
By Lemma \ref{l.e1}(4), $C(A_\al)$ is given by $k\langle y_1, \dots, y_{2m}, z_1, \dots, z_{2m}\rangle$ with relations
\begin{align*}
&y_i z_i,\ z_i y_i \;\; (1\leq i\leq 2m),\quad 
y_i y_j,\ z_i z_j\;\; (1\leq i,j\leq 2m), \\
&y_{2s-1}z_{2t-1}+\al_{2t-1,2s-1}y_{2t-1}z_{2s-1},\quad 
z_{2s-1}y_{2t-1}+\al_{2t,2s}z_{2t-1}y_{2s-1}\;\;(1\leq s<t\leq m),
\\
&y_{2s}z_{2t}+\al_{2t-1,2s-1}y_{2t}z_{2s},\quad 
z_{2s}y_{2t}+\al_{2t,2s}z_{2t}y_{2s}\;\; (1\leq s<t\leq m),
\\
&y_{2s-1}z_{2t}+\al_{2s,2t}y_{2t}z_{2s-1},\quad
z_{2s-1}y_{2t}+\al_{2s-1,2t-1}z_{2t}y_{2s-1} \;\; (1\leq s,t\leq m,\ s\neq t),
\\
&y_{2s-1}z_{2s}+y_{2s}z_{2s-1}-(y_{1}z_{2}+y_{2}z_{1}), \quad
z_{2s-1}y_{2s}+z_{2s}y_{2s-1}-(z_{1}y_{2}+z_{2}y_{1}) \;\; (1\leq s\leq m),
\\
&y_{1}z_{2}+y_{2}z_{1}+z_{1}y_{2}+z_{2}y_{1}-1
\end{align*} 
and $C(A_{\textnormal{comm}})$ is given by $k\langle  y_1', \dots, y_{2m}', z_1', \dots, z_{2m}' \rangle$ with relations
\begin{align*}
&y_i'z_i',\ z_i'y_i' \;\; (1\leq i\leq 2m),\quad 
y_i'y_j',\ z_i'z_j'\;\; (1\leq i,j\leq 2m), \quad
y_{i}'z_{j}'+y_{j}'z_{i}',\ z_{i}'y_{j}'+z_{j}'y_{i}'\;\;(1\leq i<j\leq 2m),
\\
&y_{2s-1}'z_{2s}'+y_{2s}'z_{2s-1}'-(y_{1}'z_{2}'+y_{2}'z_{1}'), \quad
z_{2s-1}'y_{2s}'+z_{2s}'y_{2s-1}'-(z_{1}'y_{2}'+z_{2}'y_{1}') \;\; (1\leq s\leq m),\\
&y_{1}'z_{2}'+y_{2}'z_{1}'+z_{1}'y_{2}'+z_{2}'y_{1}'-1.
\end{align*} 
We define an algebra homomorphism $\psi: k\langle y_1', \dots, y_{2m}', z_1', \dots, z_{2m}' \rangle \to C(A_\al)$ by 
\begin{align*}
&\psi(y_{2s-1}') = y_{2s-1}\prod_{r=1}^{s-1}F^{(1)}_{sr}, &&\psi(y_{2s}')=
\be_{2s-1,2s}y_{2s}\prod_{r=1}^{s-1}\widehat\alpha_{2r,2s}F_{sr}^{(2)},\\
&\psi(z_{2s-1}')= \be_{2s,2s-1}z_{2s-1}\prod_{r=1}^{s-1}G_{sr}^{(1)},
&&\psi(z_{2s}')= z_{2s}\prod_{r=1}^{s-1}\widehat\alpha_{2r,2s}G_{sr}^{(2)} 
\end{align*}
for $1\leq s\leq m$.

First, we show that this induces an algebra homomorphism $C(A_{\textnormal{comm}}) \to C(A_\al)$.
It is straightforward to see that $\psi(y_i'y'_j)=\psi(z_i'z'_j)=0$.
Furthermore, by Lemma \ref{l.e2}(2), $\psi(y_i'z'_i)=\psi(z_i'y'_i)=0$.

We now calculate $\psi(y_{i}'z_{j}'+y_{j}'z_{i}')$.
If $i=2s-1, j=2t-1$ are odd with $1\leq s<t\leq m$, then
\begin{align*}
&\psi(y_{i}'z_{j}'+y_{j}'z_{i}')
=(y_{2s-1}\prod_{r=1}^{s-1}F^{(1)}_{sr})
(\be_{2t,2t-1}z_{2t-1}\prod_{r=1}^{t-1}G_{tr}^{(1)})+(y_{2t-1}\prod_{r=1}^{t-1}F^{(1)}_{tr})(\be_{2s,2s-1}z_{2s-1}\prod_{r=1}^{s-1}G_{sr}^{(1)})\\
&\overset{\text{Lem.\,\ref{l.e2}(2)}}{=}
\be_{2t,2t-1}y_{2s-1}z_{2t-1}(\prod_{r=1}^{s-1}G^{(1)}_{sr})
(\prod_{r=1}^{t-1}G_{tr}^{(1)})+\be_{2s,2s-1}\widehat\alpha_{2s-1,2t}y_{2t-1}z_{2s-1}(\prod_{r=1}^{t-1}G^{(1)}_{tr})(\prod_{r=1}^{s-1}G_{sr}^{(1)})\\
&\overset{\text{Lem.\,\ref{l.e2}(3)}}{=}
\be_{2t,2t-1}(y_{2s-1}z_{2t-1}(\prod_{r=1}^{s-1}G^{(1)}_{sr})
(\prod_{r=1}^{t-1}G_{tr}^{(1)})+\alpha_{2s,2s-1}\alpha_{2s-1,2t}y_{2t-1}z_{2s-1}(\prod_{r=1}^{s-1}G_{sr}^{(1)})(\prod_{r=1}^{t-1}G^{(1)}_{tr}))\\
&\overset{\eqref{e.normale}}{=}
\be_{2t-1,2t}(y_{2s-1}z_{2t-1}+\alpha_{2t-1,2s-1}y_{2t-1}z_{2s-1})(\prod_{r=1}^{s-1}G_{sr}^{(1)})(\prod_{r=1}^{t-1}G^{(1)}_{tr})=0.
\end{align*}
If $i=2s, j=2t$ are even with $1\leq s<t\leq m$, then
\begin{align*}
&\psi(y_{i}'z_{j}'+y_{j}'z_{i}')\\
&=(\be_{2s-1,2s}y_{2s}\prod_{r=1}^{s-1}\widehat\alpha_{2r,2s}F^{(2)}_{sr})
(z_{2t}\prod_{r=1}^{t-1}\widehat\alpha_{2r,2t}G_{tr}^{(2)})+(\be_{2t-1,2t}y_{2t}\prod_{r=1}^{t-1}\widehat\alpha_{2r,2t}F^{(2)}_{tr})
(z_{2s}\prod_{r=1}^{s-1}\widehat\alpha_{2r,2s}G_{sr}^{(2)})\\
&\overset{\text{Lem.\,\ref{l.e2}(2)}}{=}
\be_{2s-1,2s}y_{2s}z_{2t}(\prod_{r=1}^{s-1}\widehat\alpha_{2r,2s}G^{(2)}_{sr})
(\prod_{r=1}^{t-1}\widehat\alpha_{2r,2t}G_{tr}^{(2)})
+
\be_{2t-1,2t}\widehat\alpha_{2t,2s}y_{2t}z_{2s}(\prod_{r=1}^{t-1}\widehat\alpha_{2r,2t}G^{(2)}_{tr})
(\prod_{r=1}^{s-1}\widehat\alpha_{2r,2s}G_{sr}^{(2)})\\
&\overset{\text{Lem.\,\ref{l.e2}(3)}}{=}
\be_{2s-1,2s}(y_{2s}z_{2t}
+
\alpha_{2t-1,2t}\alpha_{2t,2s}\alpha_{2s,2s-1}y_{2t}z_{2s})(\prod_{r=1}^{s-1}\widehat\alpha_{2r,2s}G_{sr}^{(2)})(\prod_{r=1}^{t-1}\widehat\alpha_{2r,2t}G^{(2)}_{tr})\\
&\overset{\eqref{e.normale}}{=}
\be_{2s-1,2s}(y_{2s}z_{2t}
+
\alpha_{2t-1,2s-1}y_{2t}z_{2s})(\prod_{r=1}^{s-1}\widehat\alpha_{2r,2s}G_{sr}^{(2)})(\prod_{r=1}^{t-1}\widehat\alpha_{2r,2t}G^{(2)}_{tr})=0.
\end{align*}
If $i=2s-1$ is odd, $j=2t$ is even with $s<t$, then
\begin{align*}
&\psi(y_{i}'z_{j}'+y_{j}'z_{i}')
=(y_{2s-1}\prod_{r=1}^{s-1}F^{(1)}_{sr})
(z_{2t}\prod_{r=1}^{t-1}\widehat\alpha_{2r,2t}G_{tr}^{(2)})
+
(\be_{2t-1,2t}y_{2t}\prod_{r=1}^{t-1}\widehat\alpha_{2r,2s}F^{(2)}_{tr})
(\be_{2s,2s-1}z_{2s-1}\prod_{r=1}^{s-1}G_{sr}^{(1)})\\
&\overset{\text{Lem.\,\ref{l.e2}(2)}}{=}
y_{2s-1}z_{2t}(\prod_{r=1}^{s-1}G^{(1)}_{sr})
(\prod_{r=1}^{t-1}\widehat\alpha_{2r,2t}G_{tr}^{(2)})
+
\be_{2t-1,2t}\be_{2s,2s-1}\widehat\alpha_{2t,2s-1}y_{2t}z_{2s-1}(\prod_{r=1}^{t-1}\widehat\alpha_{2r,2s}G^{(2)}_{tr})
(\prod_{r=1}^{s-1}G_{sr}^{(1)})\\
&\overset{\text{Lem.\,\ref{l.e2}(3)}}{=}
y_{2s-1}z_{2t}(\prod_{r=1}^{s-1}G^{(1)}_{sr})
(\prod_{r=1}^{t-1}\widehat\alpha_{2r,2t}G_{tr}^{(2)})
+
\alpha_{2t-1,2t}\alpha_{2t,2s-1}y_{2t}z_{2s-1}(\prod_{r=1}^{s-1}G_{sr}^{(1)})(\prod_{r=1}^{t-1}\widehat\alpha_{2r,2s}G^{(2)}_{tr})
\\
&\overset{\eqref{e.normale}}{=}
(y_{2s-1}z_{2t}+\alpha_{2s,2t}y_{2t}z_{2s-1})(\prod_{r=1}^{s-1}G_{sr}^{(1)})(\prod_{r=1}^{t-1}\widehat\alpha_{2r,2t}G_{tr}^{(2)})=0.
\end{align*}
If $i=2s-1$ is odd, $j=2t$ is even with $s>t$, then
\begin{align*}
&\psi(y_{i}'z_{j}'+y_{j}'z_{i}')
=(y_{2s-1}\prod_{r=1}^{s-1}F^{(1)}_{sr})
(z_{2t}\prod_{r=1}^{t-1}\widehat\alpha_{2r,2t}G_{tr}^{(2)})
+
(\be_{2t-1,2t}y_{2t}\prod_{r=1}^{t-1}\widehat\alpha_{2r,2s}F^{(2)}_{tr})
(\be_{2s,2s-1}z_{2s-1}\prod_{r=1}^{s-1}G_{sr}^{(1)})\\
&\overset{\text{Lem.\,\ref{l.e2}(2)}}{=}
\widehat\alpha_{2t,2s}y_{2s-1}z_{2t}(\prod_{r=1}^{s-1}G^{(1)}_{sr})
(\prod_{r=1}^{t-1}\widehat\alpha_{2r,2t}G_{tr}^{(2)})
+
\be_{2t-1,2t}\be_{2s,2s-1}y_{2t}z_{2s-1}(\prod_{r=1}^{t-1}\widehat\alpha_{2r,2s}G^{(2)}_{tr})
(\prod_{r=1}^{s-1}G_{sr}^{(1)})\\
&\overset{\text{Lem.\,\ref{l.e2}(3)}}{=}
\widehat\alpha_{2t,2s}y_{2s-1}z_{2t}(\prod_{r=1}^{s-1}G^{(1)}_{sr})
(\prod_{r=1}^{t-1}\widehat\alpha_{2r,2t}G_{tr}^{(2)})
+
\be_{2t-1,2t}\be_{2s,2s-1}y_{2t}z_{2s-1}(\prod_{r=1}^{s-1}G_{sr}^{(1)})(\prod_{r=1}^{t-1}\widehat\alpha_{2r,2s}G^{(2)}_{tr})
\\
&=
\widehat\alpha_{2t,2s}(y_{2s-1}z_{2t}+\alpha_{2s,2t}y_{2t}z_{2s-1})(\prod_{r=1}^{s-1}G_{sr}^{(1)})(\prod_{r=1}^{t-1}\widehat\alpha_{2r,2t}G_{tr}^{(2)})=0.
\end{align*}
Using the same reasoning, we can prove that 
$\psi(z_{i}'y_{j}'+z_{j}'y_{i}')=0$.

It remains to check that the remaining three relations vanish under $\psi$. For $1\leq s\leq m$, by Lemma \ref{l.e2}(2),(3),
\begin{align*}
\psi(y_{2s-1}') \psi(z_{2s}')&=
(y_{2s-1}\prod_{r=1}^{s-1}F^{(1)}_{sr})(z_{2s}\prod_{r=1}^{s-1}\widehat\al_{2s,2r}G_{sr}^{(2)})
=
y_{2s-1}z_{2s}(\prod_{r=1}^{s-1}\widehat\al_{2s,2r}G^{(1)}_{sr}G_{sr}^{(2)}),\\
\psi(y_{2s}') \psi(z_{2s-1}')&=(\be_{2s-1,2s}y_{2s}\prod_{r=1}^{s-1}\widehat\al_{2s,2r}F_{sr}^{(2)})(\be_{2s,2s-1}z_{2s-1}\prod_{r=1}^{s-1}G_{sr}^{(1)})
=
y_{2s}z_{2s-1}(\prod_{r=1}^{s-1}\widehat\al_{2s,2r}G_{sr}^{(1)}G^{(2)}_{sr}).
\end{align*}
Thus we have
\begin{align*}
&\psi(y_{2s-1}'z_{2s}'+y_{2s}'z_{2s-1}')=
(y_{2s-1}z_{2s}+y_{2s}z_{2s-1})(\prod_{r=1}^{s-1}\widehat\al_{2r,2s}G_{sr}^{(1)}G^{(2)}_{sr})
\overset{\eqref{e.ip}}{=}E(\prod_{r=1}^{s-1}\widehat\al_{2r,2s}G_{sr}^{(1)}G^{(2)}_{sr})\\
&\overset{\text{Lem.\,\ref{l.e2}(4)}}{=}E(\prod_{r=1}^{s-1}\widehat\al_{2r,2s}(1-(1-\widehat\alpha_{2s,2r})E)\overset{(\star)}{=}E(\prod_{r=1}^{s-1}\widehat\al_{2r,2s}(1-(1-\widehat\alpha_{2s,2r}))
=E(\prod_{r=1}^{s-1}\widehat\al_{2r,2s}\widehat\alpha_{2s,2r})=E.
\end{align*}
Here $(\star)$ follows from the fact that $E$ is an idempotent. Similar arguments imply $\psi(z_{2s-1}'y_{2s}'+z_{2s}'y_{2s-1}')=E'$.
Therefore,
\begin{align*}
&\psi(y_{2s-1}'z_{2s}'+y_{2s}'z_{2s-1}'-(y_{1}'z_{2}'+y_{2}'z_{1}'))=E-E=0,\\
&\psi(z_{2s-1}'y_{2s}'+z_{2s}'y_{2s-1}'-(z_{1}'y_{2}'+z_{2}'y_{1}'))=E'-E'=0,\\
&\psi(y_{1}'z_{2}'+y_{2}'z_{1}'+z_{1}'y_{2}'+z_{2}'y_{1}'-1)=E+E'-1=0.
\end{align*}

Hence we get the induced algebra homomorphism $\overline{\psi}: C(A_{\textnormal{comm}}) \to C(A_\al)$.

Next, we show by induction that $y_i, z_i \in \Im \overline{\psi}$ for all $i$, which implies that $\overline{\psi}$ is surjective.
Clearly, $y_1 = \overline{\psi}(y_1'), y_2 = \overline{\psi}(\be_{21}y_2'), z_1 = \overline{\psi}(\be_{12}z_1'), z_2 = \overline{\psi}(z_2')  \in \Im \overline{\psi}$. Suppose that $y_i, z_i \in \Im \overline{\psi}$ for all $1\leq i\leq 2\ell-2$. Then $F^{(a)}_{\ell r}, G^{(a)}_{\ell r} \in \Im \overline{\psi}$ for all $1\leq r \leq \ell-1$ and $a \in \{1,2\}$, so 
\begin{align*}
(\prod_{r=1}^{\ell-1}\widehat\alpha_{2\ell,2r})y_{2\ell-1} 
&=y_{2\ell-1}(\prod_{r=1}^{\ell-1}\widehat\alpha_{2\ell,2r}+(1-\widehat\alpha_{2\ell,2r})E')
=y_{2\ell-1}(\prod_{r=1}^{\ell-1}1-(1-\widehat\alpha_{2\ell,2r})(1-E'))\\
&=y_{2\ell-1}(\prod_{r=1}^{\ell-1}1-(1-\widehat\alpha_{2\ell,2r})E)
\overset{\text{Lem.\,\ref{l.e2}(4)}}{=}
y_{2\ell-1}(\prod_{r=1}^{\ell-1}F^{(1)}_{\ell r}F^{(2)}_{\ell r})\\
&\overset{\text{Lem.\,\ref{l.e2}(3)}}{=} \overline{\psi}(z_{2\ell-1}')(\prod_{r=1}^{\ell-1}F^{(2)}_{\ell r}) \in \Im \overline{\psi},\\
(\prod_{r=1}^{\ell-1}\widehat\al_{2\ell,2r})y_{2\ell}
&=y_{2\ell}(\prod_{r=1}^{\ell-1}\widehat\alpha_{2\ell,2r}+(1-\widehat\alpha_{2\ell,2r})E')
=y_{2\ell}(\prod_{r=1}^{\ell-1}1-(1-\widehat\alpha_{2\ell,2r})(1-E'))\\
&=y_{2\ell}(\prod_{r=1}^{\ell-1}1-(1-\widehat\al_{2\ell,2r})E)
\overset{\text{Lem.\,\ref{l.e2}(4)}}{=}
y_{2\ell}(\prod_{r=1}^{\ell-1}F^{(2)}_{\ell r}F^{(1)}_{\ell r})\\
&\overset{\text{Lem.\,\ref{l.e2}(3)}}{=}\be_{2\ell,2\ell-1}(\prod_{r=1}^{\ell-1}\widehat\al_{2\ell,2r})\overline{\psi}(y_{2\ell}')(\prod_{r=1}^{\ell-1}F^{(1)}_{\ell r}) \in \Im \overline{\psi}.
\end{align*}
Therefore, $y_{2\ell-1},y_{2\ell} \in \Im \overline{\psi}$. An analogous argument shows that $z_{2\ell-1},z_{2\ell} \in \Im \overline{\psi}$. Hence $\overline{\psi}$ is surjective.
Since $\dim_k C(A_{\textnormal{comm}})= \dim_k C(A_\al) =2^{2m+1}$, it follows that 
$\overline{\psi}$ is an isomorphism.
\end{proof}

We now prove Theorem \ref{t.intromain} in the case where $n$ is even.

\begin{proof}[Proof of Theorem \ref{t.intromain} for even $n$]

Since $A_{\textnormal{comm}} \cong k[x_1,\dots,x_n]/(x_1^2+\dots+x_n^2)=:B$,
it follows from Theorem \ref{t.eiso} and \eqref{e.commCA} that $C(A_\al) \cong C(A_{\textnormal{comm}}) \cong C(B) \cong M_{2^{(n-2)/2}}(k)^2$.
Furthermore, Theorem \ref{t.SV} and Morita theory imply that
$\uCM(A_\al) \simeq \D^b(\mod  (M_{2^{(n-2)/2}}(k)^2)^{\op})\simeq \D^b(\mod k^2)$.
\end{proof}

\section{Proof of Corollary \ref{c.intromain} and an Example}\label{sec.ce}

We give a proof of Corollary~\ref{c.intromain} here.

\begin{proof}[Proof of Corollary \ref{c.intromain}]
(1) By Theorem \ref{t.intromain}, $C(S_\alpha/(f))$ is a semisimple algebra, and hence it follows from \cite[Theorem 5.5]{MUk} that $S_\alpha/(f)$ is of finite Cohen-Macaulay representation type.
Moreover, by the proof of \cite[Theorem 5.5]{MUk}, the number of non-projective graded maximal Cohen-Macaulay modules over $S_\alpha/(f)$, up to isomorphism and degree shift, is equal to the number of isomorphism classes of simple $C(S_\alpha/(f))$-modules.
Hence, this number is one if $n$ is odd and two if $n$ is even.

(2) Since $C(S_\alpha/(f))$ is semisimple, \cite[Theorem 5.5]{MUk} also implies that $\qgr S_\alpha/(f)$ has finite global dimension.
It follows from \cite[Theorem A.4]{dV} that $\D^b(\qgr S_\alpha/(f))$ admits a Serre functor and that
$\gldim(\qgr S_\alpha/(f))=n-2$.
\end{proof}

In the commutative case, it is well-known that there exists a close relationship between maximal Cohen-Macaulay modules over a hypersurface $S/(f)$ and matrix factorizations of $f \in S$.
By using twisted matrix factorizations \cite{CKMW} or noncommutative matrix factorizations \cite{MUm}, it is known that an analogue of this correspondence also holds when $S$ is an AS-regular algebra and $f$ is a homogeneous regular normal element.
Here, as an example, we describe a non-projective indecomposable maximal Cohen-Macaulay module over $A_\al$ with $n=5$ using noncommutative matrix factorizations; note that, by Corollary~\ref{c.intromain}, it is unique up to isomorphism and degree shift.

\begin{ex}
Let us consider a skew polynomial algebra
$S := k\langle x_1,\dots,x_5\rangle /(x_i x_j - \alpha_{ij} x_j x_i)$
in five variables, and let
$f = x_1 x_2 + x_3 x_4 + x_5^2 \in S$.
Assume that $f$ is normal. This is equivalent to the following conditions:
\[
\alpha_{12}=\alpha_{13}\alpha_{14}=\alpha_{32}\alpha_{42}=\alpha_{15}^2,\quad
\alpha_{34}=\alpha_{31}\alpha_{32}=\alpha_{14}\alpha_{24}=\alpha_{35}^2,\quad \textnormal{and} \quad
\alpha_{51}\alpha_{52}=\alpha_{53}\alpha_{54}=1.
\]
We set $\alpha := \alpha_{15}$, $\beta := \alpha_{35}$, and $\gamma := \alpha_{13}$.
Then all $\alpha_{ij}$ are determined by $\alpha, \beta, \gamma$.
More precisely, the defining relations of $S$ are given by
\begin{align*}
&x_{1}x_{2}-\alpha^2 x_{2}x_{1}, 
&&x_{1}x_{3}-\gamma x_{3}x_{1},
&&x_{1}x_{4}-\alpha^2\gamma^{-1} x_{4}x_{1}, 
&&x_{1}x_{5}-\alpha x_{5}x_{1}, 
&&x_{2}x_{3}-\beta^{-1}\gamma^{-1} x_{3}x_{2}, \\
&x_{2}x_{4}-\alpha^{-2}\beta^2\gamma x_{4}x_{2}, 
&&x_{2}x_{5}-\alpha^{-1} x_{5}x_{2},
&&x_{3}x_{4}-\beta^2 x_{4}x_{3},
&&x_{3}x_{5}-\beta x_{5}x_{3},
&&x_{4}x_{5}-\beta^{-1} x_{5}x_{4}.
\end{align*}

By Theorem~\ref{t.intromain}, we have $C(S/(f)) \cong M_4(k)$.
For each $i \in \mathbb{Z}$, define
\[
\Phi^i =
\begin{bmatrix}
x_5 & \beta^{-i}x_4 & \alpha^{-i}x_2 & 0\\
\beta^{i+1}x_3 & -x_5 & 0 & \alpha^{-i-1}\gamma x_2\\
\alpha^{i+1}x_1 & 0 & -x_5 & -\beta^{-i-1}x_4\\
0 & \alpha^{i+2}\gamma^{-1}x_1 & -\beta^{i+2}x_3 & x_5
\end{bmatrix}
\in M_4(S).
\]
A direct computation shows that
\[
\Phi^i \Phi^{i+1} = f I_4,
\]
where $I_4$ denotes the identity matrix in $M_4(S)$.
Thus $\{\Phi^i\}_{i \in \ZZ}$ defines a noncommutative right matrix factorization of $f$ over $S$; see \cite[Definition~2.1 and Remark~2.2(2)]{MUm}.

Since the simple $M_4(k)$-module has dimension $4$ over $k$, it follows from the proof of \cite[Lemma~5.11]{MUk} that a non-projective indecomposable maximal Cohen-Macaulay module over $S/(f)$ is obtained from a noncommutative right matrix factorization of $f$ of rank $4$.
Hence
\[
X := \Coker \phi, \qquad
\textnormal{where }
\phi \colon S(-1)^4 \to S^4;\ \phi(a)=\Phi^0 a,
\]
is a non-projective indecomposable maximal Cohen-Macaulay module over $S/(f)$; see \cite[Proposition~5.10]{MUm}.
Therefore, every non-projective indecomposable maximal Cohen-Macaulay module over $S/(f)$ is isomorphic to $X$ up to degree shift.
For example, if we define
$\Psi^i := \Phi^{i+1}$ for $i \in \ZZ$, then $\{\Psi^i\}_{i \in \ZZ}$ defines another noncommutative right matrix factorization of $f$ over $S$. However,
\[
P \Phi^i P^{-1} = \Psi^i \textnormal{ for all } i \in \ZZ,
\qquad \text{where }
P =
\begin{bmatrix}
1 & 0 & 0 & 0\\
0 & \beta & 0 & 0\\
0 & 0 & \alpha & 0\\
0 & 0 & 0 & \alpha\beta
\end{bmatrix},
\]
so $\{\Phi^i\}_{i \in \ZZ}$ and $\{\Psi^i\}_{i \in \ZZ}$ are isomorphic as right matrix factorizations.
Hence the corresponding maximal Cohen-Macaulay modules are isomorphic.
\end{ex}

\section*{Acknowledgment}
The second author was supported by JSPS KAKENHI Grant Number JP22K03222.

\end{document}